\documentclass[a4paper]{article}
\usepackage{amsmath,amssymb,amsfonts,amsthm} % 引入AMS的数学、符号、字体和定理宏包
\usepackage{mathrsfs} % 引入花体字母宏包
\usepackage{enumerate} % 用于创建自定义的列表环境
\usepackage{cases} % 用于创建多行方程中的分段函数
\usepackage{float} % 提供更多控制浮动对象的位置的选项
\usepackage{graphicx} % 引入图形宏包 Include this line if your document contains figures,
\usepackage{subfig}
\allowdisplaybreaks[4] % 允许多行公式在页面之间断开
\usepackage{wrapfig}  % 允许图形和文字环绕
\usepackage{url} % 支持URL格式
\usepackage[usenames]{color} % 引入颜色宏包，并使用预定义的颜色名称
\usepackage{setspace} % 设置行间距
\usepackage{booktabs}
\def\b1{\mbox{\boldmath $1$}} % 定义加粗的数字1

\makeatletter
\newcommand{\Biggg}{\bBigg@{3.5}} % 定义一个新的大尺寸符号
\makeatother

\theoremstyle{plain} % 设置定理样式为plain
\newtheorem{theorem}{\bf Theorem}[section] % 定义新的定理环境，编号基于章节
\newtheorem{lemma}{\bf Lemma}[section] % 定义新的引理环境，编号与定理共享
\newtheorem{proposition}{\bf Proposition}[section] % 定义新的命题环境，编号与定理共享
\newtheorem{definition}{\bf Definition}[section] % 定义新的定义环境，编号与定理共享
\newtheorem{remark}{\bf Remark}[section]  % 定义新的备注环境，编号与定理共享
\makeatother
\allowdisplaybreaks %  允许多行公式在页面之间断开.

\usepackage[numbers,sort&compress]{natbib} % 引入natbib宏包，使参考文献连续编号的引用以压缩方式出现
\usepackage[bookmarksnumbered, bookmarksopen,colorlinks,citecolor=blue,linkcolor=blue]{hyperref} % 引入超链接宏包，并设置超链接属性

\begin{document}
\date{}

\title{Closed-loop solvability of infinite-horizon stochastic linear-quadratic problem for Markov regime-switching jump-diffusion system\thanks{Fan Wu is supported by the Natural Science Foundation of Anhui Province (Grant No. 2508085QA026) and the Provincial Natural Science Research Project of Anhui Colleges (Grant No. 2025AHGXZK30544).Jie Xiong is supported by National Key R\&D Program of China (Grant No. 2022YFA1006102), and the National Natural Science Foundation of China (Grant No. 12471418).}}
\author{Kai Ding \thanks{Department of Mathematics, Southern University of Science and Technology, Shenzhen 518055, China; E-mail: dkaiye@163.com.}\qquad
Fan Wu \thanks{Corresponding Author: School of Big Data and Statistics, Anhui University, Hefei 230601, China; E-mail: wfyy121107@163.com.}\qquad
Jie Xiong \thanks{Department of Mathematics and SUSTech International Center for Mathematics, Southern University of Science and Technology, Shenzhen, Guangdong, 518055, China; E-mail: xiongj@sustech.edu.cn.}\qquad 
Xinyue Zhang \thanks{School of Big Data and Statistics, Anhui University, Hefei 230601, China.}} 
\maketitle

\noindent{\bf Abstract:} This paper investigates a class of stochastic linear-quadratic (SLQ) control problems over an infinite horizon for Markov regime-switching jump-diffusion systems. Unlike classical diffusion models modulated by a Markov chain, we assume that the state process undergoes abrupt jumps that are synchronous with the regime switches of the Markov chain. In contrast to conventional Poisson jump-diffusion models, the jumps in the state process are entirely induced by the state transitions of the Markov chain, which can be interpreted as losses or gains of state process incurred during regime changes. Under this formulation, we thoroughly discuss the closed-loop solvability of the SLQ control problem and provide a feedback representation of the optimal control via the stabilizing solution of a system of coupled algebraic Riccati equations (CAREs). Finally, we further apply our results to a lifetime wealth tracking problem and derive the corresponding optimal investment strategy.

%A closed-form equilibrium reinsurance-investment strategy is obtained in our paper.
\medskip

\noindent{\bf Keywords:} SLQ problem, Markovian jumps, Infinite horizon, $L^2$-stability, Closed-loop solvability

\noindent{\bf MSC codes:} 93E03, 93E15

\section{Introduction}
Let $(\Omega,\mathcal{F},\mathbb{F},\mathbb{P} )$ be a complete filtered probability space on which a standard one-dimensional Brownian motion $W=\{W(t)\}_{t\geq 0}$ and a continuous time irreducible Markov chain $\alpha=\{\alpha_t\}_{t\geq 0}$  are defined with $\mathbb{F}=\{\mathcal{F}_{t}\}_{t\geq 0}$ being its natural filtration augmented by all $\mathbb{P}$-null sets in $\mathcal{F}$. Throughout this paper, we denote the state space of the Markov chain $\alpha(\cdot)$ as $\mathcal{S}:=\left\{1,2,...,L\right\}$, where $L$ is a finite natural number. The generator of Markov chain $\alpha$ is given by $\mathbf{\Pi}=\left\{\pi_{ij}\mid i,j\in\mathcal{S}\right\}$,
where $\pi_{ij}\geq 0$ for $i\neq j $ and $\pi_{ii}=-\sum_{j\neq i}\pi_{ij}$.
Let $\mathbb{I}_{A}$ be the indicator function, and $N_{j}(t)$ be the number of jumps to state $j$ up to time $t$. Then
$$\widetilde{N}_{j}(t)\triangleq N_{j}(t)-\int_{0}^{t}\lambda_{j}(s)ds\quad \text{ with }\quad \lambda_{j}(s)=\sum_{i=1,i\neq j}^{L}\left[\pi_{ij}\mathbb{I}_{\{\alpha_{s-}=i\}}\right],$$
is an $\left(\mathbb{F},\mathbb{P}\right)$-martingale for each $j\in\mathcal{S}$. We define $$\Theta(\alpha_{t})\triangleq\sum_{i=1}^{L}\Theta(i)\mathbb{I}_{(\alpha_{t}=i)},$$ 
for any given $L$-dimension vector $\mathbf{\Theta}=[\Theta(1),\Theta(2),...,\Theta(L)]$. Let $\mathbb{R}^{n\times m}$ denote the Euclidean space of all $ n \times m$  matrices and set $\mathbb{R}^{n}:=\mathbb{R}^{n\times 1}$ for simplicity. In addition, the set of all $ n\times n$ symmetric matrices is denoted by $\mathbb{S}^n$. Specially, the sets of all $ n\times n$ semi-positive definite matrices and positive definite matrices are denoted by $\overline{\mathbb{S}_{+}^n}$ and $\mathbb{S}_{+}^n$, respectively. 
 
Now, consider the following  controlled linear stochastic differential equation (SDE) with Markovian jumps over the infinite time horizon $[0,\infty)$:
 \begin{equation}\label{intro-state}
   \left\{
   \begin{aligned}
   dX(t)&=\left[A\left(\alpha_{t}\right)X(t)+B\left(\alpha_{t}\right)u(t)\right]dt
   +\left[C\left(\alpha_{t}\right)X(t)+D\left(\alpha_{t}\right)u(t)\right]dW(t)\\
   &\quad+\sum_{j=1}^{L}\left[E_{j}\left(\alpha_{t-}\right)X(t-)+F_{j}\left(\alpha_{t-}\right)u(t)\right]d\widetilde{N}_{j}(t),\\
   X(0)&=x,\quad \alpha(0)=i,\qquad t\geq 0,
   \end{aligned}
   \right.
 \end{equation}
where $A(i), \, C(i), \,  E_{j}(i)\in  \mathbb{R}^{n \times n}$, $B(i), \, D(i), \, F_{j}(i)\in \mathbb{R}^{n \times m}$ for any given $i,j\in\mathcal{S}$.  In the above, $X(\cdot)$ is called the state process, which valued in $\mathbb{R}^{n}$, and $u(\cdot)$ is called the control process, which valued in $\mathbb{R}^{m}$. The goal  of this paper
is to find an optimal control to minimize the following cost functional:
\begin{equation}\label{intro-cost}
\begin{aligned}
    J\left(x,i;u(\cdot)\right)
    & \triangleq \mathbb{E}\int_{0}^{\infty}
    \left<
    \left(
    \begin{matrix}
    Q(\alpha_{t})&S(\alpha_{t})^{\top}\\
    S(\alpha_{t})&R(\alpha_{t})
    \end{matrix}
    \right)
    \left(
    \begin{matrix}
    X(t)\\
    u(t)
    \end{matrix}
    \right),
    \left(
    \begin{matrix}
    X(t)\\
    u(t)
    \end{matrix}
    \right)
    \right>dt.
  \end{aligned}
\end{equation}
Here, for $i\in\mathcal{S}$, $Q(i)\in\mathbb{S}^{n}$, $R(i)\in\mathbb{S}^{m}$ and $S(i)\in\mathbb{R}^{m\times n}$. It is worth mentioning that the performance coefficients $Q(\cdot) $ and $R(\cdot) $ in the above are not necessarily (semi) positive definite matrices. Hence, we are about to solve an indefinite SLQ control problem over infinite horizon.

%Contrast to the finite time horizon SLQ problem, an important difference in the infinite time horizon SLQ problem is that even if the control process is square integrable, it is difficult to guarantee that the state equation is square integrable, and hence, it cannot guarantee that the performance functional is well-defined. 
By a standard argument using contraction mapping theorem, we can show that for any initial state $(x,i)\in\mathbb{R}^{n}\times\mathcal{S}$ and control $u(\cdot)\in L_{\mathcal{P}}^{2}(\mathbb{R}^{m})$, the state equation \eqref{intro-state} admits a unique solution $X(\cdot)=X(\cdot;x,i,u)\in L_{\mathbb{F}}^{2,loc}(\mathbb{R}^{n})$. To ensure the cost  functional is well-defined, we introduce the following set:
$$\mathcal{U}_{ad}(x,i) \triangleq \left\{u(\cdot)\in L_{\mathcal{P}}^{2}(\mathbb{R}^{m}) \mid X(\cdot;x,i,u)\in L_{\mathbb{F}}^{2}(\mathbb{R}^{n})\right\}, \quad (x,i)\in \mathbb{R}^{n}\times\mathcal{S}. $$
An element $u(\cdot)\in \mathcal{U}_{ad}(x,i)$ is called an admissible control for the initial state $(x,i)$.  Based on the above notations, we introduce the following problem.

\noindent\textbf{Problem (M-SLQ)$_{\infty}$.} For any given $(x,i)\in \mathbb{R}^{n}\times \mathcal{S}$, find a $u^{*}(\cdot)\in\mathcal{U}_{ad}(x,i)$ such that
\begin{equation}\label{value}
     J\left(x,i;u^{*}(\cdot)\right)=\inf_{u(\cdot)\in\mathcal{U}_{ad}(x,i)}J\left(x,i;u(\cdot)\right)\triangleq V(x,i).
\end{equation}
In the above, the element $u^{*}(\cdot)\equiv u(\cdot;x,i)\in \mathcal{U}_{ad}(x,i)$ is called an open-loop optimal control for Problem (M-SLQ)$_{\infty}$ at initial value $(x,i)$, and the function $V(\cdot,\cdot)$ is called the value function of Problem (M-SLQ)$_{\infty}$.

The study of SLQ control problems can be traced back to the pioneering work of Bismut \cite{Bismut.J.M.1976_LQR}. Since then, SLQ control has attracted considerable attention due to its wide applications in engineering, finance, insurance, economics, and related fields. In the finite horizon setting, Tang \cite{Tang.S.J.2003_LQR} investigated SLQ control problem with random coefficients and established the existence and uniqueness of solutions to the associated backward stochastic Riccati equations under standard assumption. Yong \cite{yong_linear-quadratic_2013} studied mean-field SLQ control problem governed by diffusion system and derived a feedback representation of the optimal control by solving the corresponding differential Riccati equations. 
Zhang et al. \cite{ZhangDongMeng2020BSREJ} investigated an SLQ control problem with Poisson jumps and derived a feedback representation of the optimal control.
For infinite horizon SLQ control problems, early contributions date back to the works of Ait Rami et al. \cite{Rami-Zhou-Moore-2000-ID-LQ-IF} and Ait Rami and Zhou \cite{Rami-Zhou-2000-LMI-RE-IDLQIF}. Under the assumption that the state process of the control system is mean-square stable, they successfully solved the indefinite SLQ control problem for diffusion system by means of linear matrix inequalities and semidefinite programming techniques. Along this line, Li et al. \cite{Li-Zhou-Rami-2003-ID-MLQ-IF} extended the results of \cite{Rami-Zhou-Moore-2000-ID-LQ-IF,Rami-Zhou-2000-LMI-RE-IDLQIF} to SLQ control problem for diffusion systems modulated by a Markov chain. Huang et al. \cite{Jianhui-Huang-2015} obtained the optimal feedback control for mean-field SLQ problem under the assumption that the state process is $L^{2}$-stabilizable. Sun and Yong \cite{Sun-Yong-2018-ISLQI} further investigated the open-loop and closed-loop solvability of infinite horizon SLQ control problems, and proved that both types of solvability are equivalent to the existence of a static stabilizing solution to a constrained algebraic Riccati equation. In our previous work \cite{WuLiZhang2025}, we have developed the corresponding results in \cite{Sun-Yong-2018-ISLQI} to the SLQ control problem for Markov-chain-modulated diffusion system in infinite horizon. For further studies on the infinite-horizon SLQ control problems, interested readers may refer to references \cite{Chen-Xi-2004-stochastic,PuZhang2021ConstrainedSLQ, Yao-Zhang-ZHou-2004,bai-2020-linear,Li-Shi-Yong-2021-ID-MFLQ-IF,Wei-Qingmeng-2021-infinite,li-2022-reinforcement}, among others.
However, it is worth noting that the introduction of Markovian jumps into the state system will induce a fundamental change in the control problem.

In the classical Markov-modulated diffusion setting, the state equation is usually of the form
\begin{equation}
\left\{
\begin{aligned}
dX(t) &= \big[A(\alpha_{t})X(t)+B(\alpha_{t})u(t)\big]dt
       +\big[C(\alpha_{t})X(t)+D(\alpha_{t})u(t)\big]dW(t), \\
X(0)&=x,\qquad \alpha(0)=i.
\end{aligned}
\right.
\label{eq:classical_mjls}
\end{equation}
In the above, %the Markov chain modulates the system coefficients, %such as the drift matrix, diffusion matrix, control matrix, and weighting matrices in the cost functional. 
%and the main feature of \eqref{eq:classical_mjls} is that the system evolves continuously between and across regime switches. A
although the coefficients change when the Markov chain jumps from one state to another, the continuous state process $X(\cdot)$ itself does not experience an instantaneous jump at the switching time. Therefore, the classical model is suitable for situations in which the operating environment changes randomly, but the state variable adjusts continuously. In contrast, the system \eqref{intro-state} contains an additional jump term driven by the compensated counting processes associated with the regime switches. Thus, a transition of the Markov chain is no longer merely a change of coefficients; it may also generate an instantaneous jump in the continuous state variable.
This distinction is substantial. In the classical Markov-modulated diffusion model, a transition from regime $i$ to regime $j$ changes the future dynamics from
\[
(A(i),B(i),C(i),D(i))
\quad\hbox{to}\quad
(A(j),B(j),C(j),D(j)),
\]
but the state remains continuous. In the regime-switching jump-diffusion model, the same transition may additionally produce
\[
\Delta X(t)
=
E_{j}(i)X(t-)+F_{j}(i)u(t),
\]
when $\alpha(t-)=i,\ \alpha(t)=j$.
Consequently, the proposed model captures both the persistent effect of regime changes on future dynamics and the instantaneous shock caused by the switching event itself. Here, the jump magnitude $\Delta X(t)$ can be regarded as a loss or gain incurred by the state process accompanying the regime switching. Mathematically, introducing Markovian jumps into the state equation not only modifies the infinite-horizon stability of the state system but also affects the admissible control set and the convexity of the cost functional. In particular, for the state system \eqref{intro-state}, we will derive a more intricate system of CAREs, which are expected to incorporate additional nonlinear terms involving the jump matrices $E_{j}(i)$ and $F_{j}(i)$.

% the Markovian jumps lead to a fundamentally different structure in the dynamic programming equation, and the associated Riccati system. Specifically, in the classical model, the Markov chain contributes the coupling terms of value function in the dynamic programming equation as
% \[
% \sum_{j=1}^{L}\pi_{ij}\big[V(x,j)-V(x,i)\big].
% \]
% However, in the regime-switching jump-diffusion case, the corresponding coupling term becomes
% \[
% \sum_{j=1}^{L}\pi_{ij}
% \left[
% V\big(x+E_{j}(i)x+F_{j}(i)u,j\big)-V(x,i)
% \right],
% \]
% which involves both a change of regime and a jump of the state. Therefore, the Riccati equations are expected to contain additional nonlinear terms involving the jump matrices $E_{j}(i)$ and $F_{j}(i)$. In the infinite horizon case, these terms also affect stabilizability, admissibility, convexity of the cost functional, and the existence of stabilizing solutions to the associated CAREs.

From the application point of view, the regime-switching jump-diffusion formulation is motivated by the observation that, in many real systems, a regime switch represents not only a change in model parameters but also an instantaneous shock to the state. While classical Markov-modulated diffusion models can describe different operating modes (see, for example, \cite{chen_stochastic_2022,WangJinWei2019}), they do not capture the sudden loss or gain occurring at the switching instant. By allowing regime transitions to generate endogenous jumps in the controlled state, the proposed model provides a more realistic framework for systems in which structural changes and abrupt shocks coexist. This modeling feature proves to be particularly advantageous in many application areas.
In financial markets, Markov regime-switching jump-diffusion models provide a natural framework for describing asset prices subject to both regime-dependent market parameters and abrupt jump shocks, and have been applied to portfolio selection and related optimal investment problems \cite{shi2025optimal,Zhang-Siu-Meng-2010,Zhang-Elliott-Siu-Guo-2011}. 
In stochastic control systems, the stochastic maximum principle for Markov regime-switching jump-diffusion systems has received widespread attention in the literature (see, for example, \cite{zhang_stochastic_2012,sun2018maximum,lv_stochastic_2018,song2022general,chen2023general,nguyen_general_2021}).
 Further studies on Markov regime-switching jump-diffusion systems in areas including risk-sensitive control, queueing theory, and statistical modeling can be found in Sun et al. \cite{sun_risk-sensitive_2018}, Azam et al. \cite{A} and Shu et al. \cite{B}.

In this paper, we will investigate the closed-loop solvability of Problem (M-SLQ)$_{\infty}$, and study a class of lifetime wealth tracking problem based on the derived results. The main contributions of this paper can be concluded as follows:
\begin{itemize}
  \item [(i)] We provide an equivalent characterization of the $L^{2}$ -stability for linear systems driven jointly by Brownian motion and Markovian jumps over the infinite time horizon, which further develops the corresponding result in \cite{WuLiZhang2025} by introducing Markovian jumps into the linear system.
  \item [(ii)] We present an equivalent characterization of the closed-loop solvability of Problem (M-SLQ)$_{\infty}$. This characterization allows us to obtain the optimal control in feedback form by solving the stabilizing solution of the corresponding CAREs.
  \item [(iii)] We study a lifetime wealth tracking problem and provide the corresponding optimal investment strategy along with its economic interpretation, thereby shedding light on the derived results.
\end{itemize}

The rest of this paper is organized as follows. Section \ref{sec-2} investigates the $L^{2}$-stability of the linear regime-switching jump-diffusion system and provides some  preliminary results. In section \ref{sec-3}, we establish the equivalence between the closed-loop solvability of Problem (M-SLQ)$_{\infty}$ and the existence of a static stabilizing solution to a class of CAREs. As an application, Section \ref{sec-4} studies a class of lifetime wealth tracking problem, whose numerical analysis is provided in Section \ref{sec-5}.

\section{Preliminary results}\label{sec-2}
We begin this section by introducing some notations that were not presented in the previous section. For any $M, N \in \mathbb{S}^n$, we write $M \geqslant N$ (respectively, $M>N$) if $M-N$ is semi-positive definite (respectively, positive definite). For any Banach space $\mathbb{B}$, we denote
$$\mathcal{D}\left(\mathbb{B}\right)\triangleq\left\{\mathbf{\Lambda}=\left(\Lambda(1),\cdots,\Lambda(L)\right) \mid \Lambda(i) \in \mathbb{B}\text{, } \forall i\in \mathcal{S}\right\}.$$
Specially, if $\mathbf{\Lambda}\in \mathcal{D}\left(\mathbb{S}^{n}\right)$, then we say $\lambda$ (resp. $\mu$) is the smallest (resp. largest) eigenvalue of $\mathbf{\Lambda}$ if it satisfies
$$
  \lambda=\min\{\lambda_{1},\lambda_{2},\cdots,\lambda_{L}\}\quad
\left(\mu=\max\{\mu_{1},\mu_{2},\cdots,\mu_{L}\}\right),
  $$
where $\lambda_{i}$ (resp. $\mu_{i}$) is the smallest (resp. largest) eigenvalue of $\Lambda(i)$,
$i\in\mathcal{S}$. Without causing confusion, we sometimes also say that $\lambda$ (resp. $\mu$) is the smallest (resp. largest) eigenvalue of process $\Lambda(\alpha)$.
Let $\mathcal{P}$ be the $\mathbb{F}$ predictable $\sigma$-field on $[0,\infty)\times\Omega$ and write $\varphi \in \mathbb{F}$ (resp. $\varphi \in \mathcal{P}$) if it is $\mathbb{F}$-progressively measurable (resp. $\mathcal{P}$-measurable). Then, for any Euclidean space $\mathbb{H}$, we introduce the following space:
$$
\begin{aligned}
&L_{\mathbb{F}}^{2,loc}(\mathbb{H}) =\left\{\varphi:[0, \infty)\times \Omega \rightarrow \mathbb{H} \mid \varphi(\cdot) \in \mathbb{F}\text{, } \mathbb{E} \int_{0}^{T}|\varphi(s)|^{2} ds<\infty, \quad\forall T>0\right\}, \\
&L_{\mathbb{F}}^{2}(\mathbb{H}) =\left\{\varphi:[0, \infty) \times \Omega \rightarrow \mathbb{H} \mid \varphi(\cdot) \in \mathbb{F}\text{, } \mathbb{E} \int_{0}^{\infty}|\varphi(s)|^{2} ds<\infty\right\},\\
&L_{\mathcal{P}}^{2}(\mathbb{H}) =\left\{\varphi:[0, \infty) \times \Omega \rightarrow \mathbb{H} \mid \varphi(\cdot) \in \mathcal{P}\text{, } \mathbb{E} \int_{0}^{\infty}|\varphi(s)|^{2} ds<\infty\right\}.
\end{aligned}
$$

Next, we introduce some concepts and useful results,  which will play an important role in our further analysis. Before we embark on this program, we first consider the following uncontrolled system:
\begin{equation}\label{AE}
\left\{
\begin{aligned}
&dX(t)=A(\alpha_{t})X(t)dt+C(\alpha_{t})X(t)dW(t)+\sum_{j=1}^{L}E_{j}(\alpha_{t-})X(t-)d\widetilde{N}_{j} (t), \quad t\geq 0,\\
&X(0)=x,\quad \alpha_{0}=i.
\end{aligned}
\right.
\end{equation}
For simplicity, we denote the above system as $[A,C,\mathbf{E}]_{\alpha}$ and introduce the following definition.
\begin{definition}
System $[A,C,\mathbf{E}]_{\alpha}$ is said to be $L^{2}$-stable if, for any $(x,i)\in\mathbb{R}^{n}\times\mathcal{S}$, the solution $X(\cdot)\equiv X(\cdot;x,i)$ of \eqref{AE}  is in $L_{\mathbb{F}}^{2}(\mathbb{R}^{n})$.
\end{definition}
The following proposition provides an equivalent characterization for the $L^{2}$-stability of system $[A,C,\mathbf{E}]_{\alpha}$.

\begin{proposition}\label{prop-L2}
  The system $[A,C,\mathbf{E}]_{\alpha}$  is $L^{2}$-stable if and only if there exist a $\mathbf{P}=\left[P(1),P(2),...,P(L)\right]\in\mathcal{D}\left(\mathbb{S}_{+}^{n}\right)$ such that:
    \begin{equation}\label{eq-L2}
    \begin{aligned}
    P(i)A(i)&+A(i)^{\top}P(i)+C(i)^{\top}P(i)C(i)+\sum_{j\neq i}^{L}\pi_{ij}\left[(P(j)-P(i))E_{j}(i)\right.\\
    &\left.+E_{j}(i)^{\top}(P(j)-P(i))+E_{j}(i)^{\top}P(j)E_{j}(i)+P(j)-P(i)\right]< 0,\quad i\in\mathcal{S}.
    \end{aligned}
    \end{equation}
\end{proposition}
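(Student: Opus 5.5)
The plan is a Lyapunov argument built on It\^o's formula for the quadratic functional $\langle P(\alpha_t)X(t),X(t)\rangle$. First I will compute its differential along solutions of \eqref{AE}. At a switching time with $\alpha_{t-}=i$ and $\alpha_t=j$, the functional jumps by
\[
X(t-)^{\top}\big[(I+E_j(i))^{\top}P(j)(I+E_j(i))-P(i)\big]X(t-).
\]
The compensated form of the $d\widetilde N_j$ term also contributes the drift $-\sum_{j\neq i}\pi_{ij}E_j(i)X$. This drift produces the correction $-\sum_{j\neq i}\pi_{ij}\big[P(i)E_j(i)+E_j(i)^{\top}P(i)\big]$. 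Combining the jump compensator with this correction gives exactly the bracket in \eqref{eq-L2}. Write $\mathcal L[\mathbf P](i)$ for the left-hand side of \eqref{eq-L2}. Then
\[
d\langle P(\alpha_t)X,X\rangle=\langle \mathcal L[\mathbf P](\alpha_t)X,X\rangle dt+dM(t),
\]
where $M$ is a martingale (not merely a local martingale) on every $[0,T]$. This holds because $\mathbb E\sup_{[0,T]}|X|^2<\infty$ and the coefficients and intensities are bounded.

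For sufficiency, assume $\mathcal L[\mathbf P](i)<0$ for all $i$. Choose $\varepsilon>0$ with $\mathcal L[\mathbf P](i)\le-\varepsilon I$ for every $i$. Taking expectations on $[0,T]$ gives
\[
0\le\mathbb E\langle P(\alpha_T)X(T),X(T)\rangle\le\langle P(i)x,x\rangle-\varepsilon\,\mathbb E\int_0^T|X|^2dt.
\]
Letting $T\to\infty$ yields $X\in L^2_{\mathbb F}(\mathbb R^n)$.

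For necessity, assume $L^2$-stability. By linearity of \eqref{AE} in $x$, the map $x\mapsto \mathbb E\int_0^\infty|X(t;x,i)|^2dt$ is a finite quadratic form. Finiteness on a basis and on sums of basis vectors gives finiteness of all cross terms via Cauchy--Schwarz. So there are $P(i)\in\overline{\mathbb S^n_+}$ with $x^{\top}P(i)x=\mathbb E\int_0^\infty|X(t;x,i)|^2dt$.

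Positive definiteness comes next. For $x\neq0$, the path $X$ is right-continuous at $0$ with $X(0)=x$. No switch occurs on a short interval with positive probability, so the integral is strictly positive.

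Then I will use the Markov (flow) property of $(X,\alpha)$, which holds because \eqref{AE} is linear and time-homogeneous. It gives
\[
x^{\top}P(i)x=\mathbb E\int_0^t|X(s)|^2ds+\mathbb E\langle P(\alpha_t)X(t),X(t)\rangle .
\]
Comparing this with the It\^o identity above, dividing by $t$, and letting $t\downarrow0$ yields $x^{\top}\mathcal L[\mathbf P](i)x=-|x|^2$ for all $x$. Hence $\mathcal L[\mathbf P](i)=-I<0$, which is \eqref{eq-L2}. The limit uses right-continuity in mean square of $X$ and of $\mathbb I_{\{\alpha_s=i\}}$ at $s=0$.

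I expect the main obstacle to be the necessity direction, specifically justifying the flow/Markov identity rigorously. This requires conditioning on $\mathcal F_t$ and identifying $\mathbb E[\int_t^\infty|X|^2ds\mid\mathcal F_t]=\langle P(\alpha_t)X(t),X(t)\rangle$. For this I will use uniqueness of solutions to \eqref{AE} started at time $t$ from $(X(t),\alpha_t)$, together with the fact that the shifted Brownian motion and chain are independent of $\mathcal F_t$ given $\alpha_t$. The jump-term bookkeeping in It\^o's formula is the other delicate step, but it is a routine computation once set up as above.
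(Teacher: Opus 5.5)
Your proof is correct, but the necessity half takes a different route from the paper.

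\textbf{How the paper argues.} The paper does not define $P$ directly as the infinite-horizon cost. Instead it solves the linear ODE system $\dot P(t,i)=\mathcal L[\mathbf P(t)](i)+\Lambda(i)$, $P(0,i)=0$, for an arbitrary $\Lambda(i)>0$; here $\mathcal L[\cdot]$ is, as in your notation, the left-hand side of \eqref{eq-L2}. It then reverses time and applies It\^o's formula to $\langle P^{\tau}(t,\alpha_t)X(t),X(t)\rangle$ to get $P(\tau,i)=\mathbb E\int_0^\tau\Phi_i(t)^{\top}\Lambda(\alpha_t)\Phi_i(t)\,dt$. $L^2$-stability and monotonicity in $\tau$ give a limit $P(i)$. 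Letting $\tau\to\infty$ in $P(\tau+1,i)-P(\tau,i)=\int_\tau^{\tau+1}(\cdots)\,dt$ then yields $\mathcal L[\mathbf P](i)=-\Lambda(i)$.

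\textbf{What the paper's route buys.} It uses only It\^o's formula with a deterministic time-dependent weight. So it avoids exactly the step you identify as the main obstacle, the flow identity $\mathbb E[\int_t^\infty|X(s)|^2ds\mid\mathcal F_t]=\langle P(\alpha_t)X(t),X(t)\rangle$. That identity needs a measurable solution map and the conditional law of the post-$t$ noise given $\mathcal F_t$, in effect the independence of $W$ and $\alpha$, which the paper's setting leaves implicit. The paper's route also gives positive definiteness for free, since $P(i)\geq P(\tau,i)=\tau\Lambda(i)+o(\tau)$. Finally, it yields the Lyapunov equation for every $\Lambda>0$, not only $\Lambda=I$.

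\textbf{What your route buys.} It is shorter and more conceptual. It identifies $P(i)$ at once as the cost-to-go of the uncontrolled system and obtains the algebraic equation from a generator computation at $t=0$. This is the same dynamic-programming pattern the paper itself uses in the necessity part of Theorem~\ref{thm-SLQ_0-closed}. For sufficiency, your direct bound $\varepsilon\,\mathbb E\int_0^T|X|^2dt\leq\langle P(i)x,x\rangle$ is simpler than the paper's appeal to Theorem~\ref{thm-FSDE}, whose Gronwall-type argument is built for the inhomogeneous equation.
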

\begin{proof} The proof of sufficiency can be considered as a special case of Theorem \ref{thm-FSDE} with $b(\cdot)=\sigma(\cdot)=\gamma_{j}(\cdot)=0$. Here, we only prove the necessity. By the definition, the $L^{2}$-stability of system $[A,C,\mathbf{E}]_{\alpha}$ implies that
  \begin{equation}\label{prop-L2-1}
  \mathbb{E}\Big[\int_{0}^{\infty}\big<\Lambda(\alpha_{t})X(t;x,i),X(t;x,i)\big>dt\Big]<\infty,\qquad \forall (x,i)\in\mathbb{R}^{n}\times\mathcal{S},
  \end{equation}
for any given $\mathbf{\Lambda}=\left[\Lambda(1),\Lambda(2),...,\Lambda(L)\right]\in\mathcal{D}\left(\mathbb{S}_{+}^{n}\right)$.
  
  Now, let us consider the following linear equations:
  \begin{equation}\label{prop-L2-2}
  \left\{
  \begin{aligned}
   & \Dot{P}(t,i)=P(t,i)A(i)+A(i)^{\top}P(t,i)+C(i)^{\top}P(t,i)C(i)+\Lambda(i)+\sum_{j\neq i}^{L}\pi_{ij}\big[(P(t,j)-P(t,i))E_{j}(i)\\
    &\qquad\qquad+E_{j}(i)^{\top}(P(t,j)-P(t,i))+E_{j}(i)^{\top}P(t,j)E_{j}(i)+P(t,j)-P(t,i)\big],\\
   &P(0,i)=0, \quad i\in\mathcal{S},\quad t\geq 0,
   \end{aligned}
   \right.
  \end{equation}
  which admits a unique solution $\left\{P(\cdot,i)\right\}_{i\in\mathcal{S}}$ defined on the interval $[0,\infty)$. For any fixed $\tau>0$, we define
  $$P^{\tau}(t,i)\triangleq P(\tau-t,i),\quad t\in[0,\tau],\quad i\in\mathcal{S}.$$
  Then $\left\{P^{\tau}(\cdot,i)\right\}_{i\in\mathcal{S}}$ is the solution to the following equations:
  \begin{equation*}
  \left\{
  \begin{aligned}
       &\Dot{P}^{\tau}(t,i)=-\big\{P^{\tau}(t,i)A(i)+A(i)^{\top}P^{\tau}(t,i)+C(i)^{\top}P^{\tau}(t,i)C(i)+\Lambda(i)+\sum_{j\neq i}^{L}\pi_{ij}\big[(P^{\tau}(t,j)-P^{\tau}(t,i))E_{j}(i)\\
    &\qquad\qquad+E_{j}(i)^{\top}(P^{\tau}(t,j)-P^{\tau}(t,i))+E_{j}(i)^{\top}P^{\tau}(t,j)E_{j}(i)+P^{\tau}(t,j)-P^{\tau}(t,i)\big]\big\},\\
   &P^{\tau}(\tau,i)=0, \quad t\in[0,\tau], \quad i\in\mathcal{S}.
   \end{aligned}
   \right.
  \end{equation*}
  
  Note that for any $(x,i)\in\mathbb{R}^{n}\times\mathcal{S}$, 
  \begin{equation*}
  \begin{aligned}
 &\quad -\left<P(\tau,i)x,x\right>= -\left<P^{\tau}(0,i)x,x\right>=\mathbb{E}\big[\left<P^{\tau}(\tau,\alpha_{\tau})X(\tau;x,i),X(\tau;x,i)\right>
 -\left<P^{\tau}(0,i)x,x\right>\big]\\
 &=\mathbb{E}\int_{0}^{\tau}\Big<\Big\{\Dot{P}^{\tau}(t,\alpha_{t})+P^{\tau}(t,\alpha_{t})A(\alpha_{t})
 +A(\alpha_{t})^{\top}P^{\tau}(t,\alpha_{t})+C(\alpha_{t})^{\top}P^{\tau}(t,\alpha_{t})C(\alpha_{t})
 \\
  &\qquad\qquad +\sum_{j=1}^{L}\lambda_{j}(t)\Big[P^{\tau}(t,j)-P^{\tau}(t,\alpha_{t})
  +(P^{\tau}(t,j)-P^{\tau}(t,\alpha_{t}))E_{j}(\alpha_{t})
  \\
  &\qquad\qquad +E_{j}(\alpha_{t})^{\top}(P^{\tau}(t,j)-P^{\tau}(t,\alpha_{t})) +E_{j}(\alpha_{t})^{\top}P^{\tau}(t,j)E_{j}(\alpha_{t}) \Big]\Big\}X(t;x,i),X(t;x,i)\Big>dt\\
  &=-\mathbb{E}\int_{0}^{\tau}\big<\Lambda(\alpha_{t}) X(t;x,i),X(t;x,i)\big>dt
  =-\mathbb{E}\Big\{\int_{0}^{\tau}\big<\Phi_{i}(t)^{\top}\Lambda(\alpha_{t})\Phi_{i}(t) x,x\big>dt\mid\alpha_{0}=i\Big\},
  \end{aligned}
  \end{equation*}
  where $\Phi_{i}(\cdot)$ satisfies
  \begin{equation}\label{AC-Phi}
\left\{
\begin{aligned}
&d\Phi_{i}(t)=A(\alpha_{t})\Phi_{i}(t)dt+C(\alpha_{t})\Phi_{i}(t)dW(t)+\sum_{j=1}^{L}E_{j}(\alpha_{t-})\Phi_{i}(t-)d\widetilde{N}_{j}(t), \quad t\geq 0,\\
&\Phi_{i}(0)=I,\quad \alpha_{0}=i.
\end{aligned}
\right.
\end{equation}
Following from arbitrary $x\in\mathbb{R}^{n}$, we obtain that the solution of equations \eqref{prop-L2-2} admits the following representation:
$$P(\tau,i)=\mathbb{E}\Big\{\int_{0}^{\tau}\Phi_{i}(t)^{\top}\Lambda(\alpha_{t})\Phi_{i}(t)dt\mid \alpha_{0}=i\Big\},\quad \tau\geq 0.$$

Let $\mathbf{\Lambda}=\left[\Lambda(1),\Lambda(2),...,\Lambda(L)\right]\in\mathcal{D}\left(\mathbb{S}_{+}^{n}\right)$.
Then $P(\tau,i)$ defined above is increasing in $\tau$. Additionally, by equation \eqref{prop-L2-1}, we can further obtain that
$$\mathbb{E}\Big\{\int_{0}^{\infty}\Phi_{i}(t)^{\top}\Lambda(\alpha_{t})\Phi_{i}(t)dt\mid \alpha_{0}=i\Big\}<\infty,\quad i\in\mathcal{S}.$$
Hence, by monotone convergence theorem,  there exists a $P(i)\in\mathbb{S}_{+}^{n}$ such that
$$P(i)=\lim_{\tau\rightarrow\infty}P(\tau,i)=\mathbb{E}\Big\{\int_{0}^{\infty}\Phi_{i}(t)^{\top}\Lambda(\alpha_{t})\Phi_{i}(t)dt\mid \alpha_{0}=i\Big\},\qquad  i\in\mathcal{S}.$$

On the other hand, note that
$$
\begin{aligned}
& P(\tau+1,i)-P(\tau,i)=\int_{\tau}^{\tau+1}\big\{P(t,i)A(i)+A(i)^{\top}P(t,i)+C(i)^{\top}P(t,i)C(i)+\Lambda(i)\\
&\quad+\sum_{j\neq i}^{L}\pi_{ij}\big[(P(t,j)-P(t,i))E_{j}(i)+E_{j}(i)^{\top}(P(t,j)-P(t,i))+E_{j}(i)^{\top}P(t,j)E_{j}(i)+P(t,j)-P(t,i)\big]\big\}dt.
\end{aligned}$$
Letting $\tau\rightarrow\infty$ yields
$$
\begin{aligned}
0&=P(i)A(i)+A(i)^{\top}P(i)+C(i)^{\top}P(i)C(i)+\Lambda(i)+\sum_{j\neq i}^{L}\pi_{ij}\big[(P(j)-P(i))E_{j}(i)\\
&\quad+E_{j}(i)^{\top}(P(j)-P(i))+E_{j}(i)^{\top}P(j)E_{j}(i)+P(j)-P(i)\big],
\end{aligned}
$$
which, combining with the fact $\mathbf{\Lambda}\in\mathcal{D}\left(\mathbb{S}_{+}^{n}\right)$, further implies that the condition \eqref{eq-L2} holds.
\end{proof}

\begin{remark}\label{rmk-stable}\rm
From the above proposition, one can easily obtain a sufficient condition to guarantee the  $L^{2}$-stability of the system $[A,C,\mathbf{E}]_{\alpha}$, that is,
$$
A(i)+A(i)^{\top}+C(i)^{\top}C(i)+\sum_{j\neq i}^{L}\pi_{ij}E_{j}(i)^{\top}E_{j}(i)< 0,\quad i,j\in\mathcal{S}.
$$
In this case, we can  set $P(i)=I$ ($i\in\mathcal{S}$) so that inequality \eqref{eq-L2} holds.
\end{remark}

Next, we consider the following linear SDE in the infinite time horizon $[0,\infty)$:
 \begin{equation}\label{FSDE}
\left\{
\begin{aligned}
&dX(t)=\left[A(\alpha_{t})X(t)+b(t)\right]dt+\left[C(\alpha_{t})X(t)+\sigma(t)\right]dW(t)
+\sum_{j=1}^{L}\left[E_{j}(\alpha_{t-})X(t-)+\gamma_{j}(t)\right]d\widetilde{N}_{j}(t),\\
&X(0)=x,\quad \alpha_{0}=i,\quad t\geq 0.
\end{aligned}
\right.
\end{equation}

\begin{definition} A $X(\cdot)$ is called the $L^{2}$-stable adapted solution of \eqref{FSDE} if  $X(\cdot)$ is a solution of SDE
 \eqref{FSDE} and satisfies  $X(\cdot)\in L_{\mathbb{F}}^{2}(\mathbb{R}^{n})$.

\end{definition}

\begin{theorem}\label{thm-FSDE}
  Suppose that system $[A,C,\mathbf{E}]_{\alpha}$ is $L^{2}$-stable. Then for any given $b(\cdot),\,\sigma(\cdot)\in L_{\mathbb{F}}^{2}(\mathbb{R}^{n})$, $\gamma_{j}(\cdot)\in L_{\mathcal{P}}^{2}(\mathbb{R}^{n}) \,(j\in\mathcal{S})$ and initial value $(x,i)\in\mathbb{R}^{n}\times\mathcal{S}$,  the FSDE \eqref{FSDE} admits a unique $L^{2}$-stable adapted solution $X(\cdot)$  such that
  \begin{equation}\label{FSDE-E}
   \mathbb{E}\int_{0}^{\infty}|X(t)|^{2}dt\leq K\Big[ |x|^{2}+\mathbb{E}\int_{0}^{\infty}\big[|b(t)|^{2}+|\sigma(t)|^{2}+\sum_{j=1}^{L}\lambda_{j}(t) |\gamma_{j}(t)|^{2}\big]dt\Big],\quad \text{for some }K>0.
  \end{equation}
\end{theorem}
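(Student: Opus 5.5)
The plan is a Lyapunov-function argument, using the matrices supplied by Proposition \ref{prop-L2}. Only its necessity direction was proved independently; its sufficiency part is deferred to this theorem. So I will start from $\mathbf{P}\in\mathcal{D}(\mathbb{S}^n_+)$ satisfying \eqref{eq-L2}. It is obtained exactly as in the necessity proof above, with $\Lambda(i)=I$, which produces the equality with $-I$ on the right-hand side. Let $\lambda>0$ and $\mu$ be the smallest and largest eigenvalues of $\mathbf{P}$. Write $\mathcal{L}(P)(i)$ for the left-hand side of \eqref{eq-L2}, so that $\mathcal{L}(P)(i)=-I$.

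Existence and uniqueness of a solution in $L^{2,loc}_{\mathbb{F}}$ on every finite horizon is standard, by contraction mapping. Uniqueness in $L^2_{\mathbb{F}}$ then follows trivially, so the real content is the estimate \eqref{FSDE-E}. Fix $T>0$ and apply It\^o's formula for jump processes to $\langle P(\alpha_t)X(t),X(t)\rangle$ on $[0,T]$. One must account for both the switching of $\alpha$ and the state jump $\Delta X=E_j(\alpha_{t-})X(t-)+\gamma_j(t)$ at a transition to $j$. After taking expectations, the martingale terms vanish; this needs a localization argument, using stopping times and the local square integrability. The jump contribution has intensity $\lambda_j(t)$ and equals
\begin{equation*}
\big\langle P(j)\big[(I+E_j)X+\gamma_j\big],(I+E_j)X+\gamma_j\big\rangle-\langle P(\alpha_{t})X,X\rangle .
\end{equation*}
The compensator of $d\widetilde N_j$ also produces a drift correction $-\lambda_j(E_jX+\gamma_j)$. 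In the $X$-quadratic part this correction combines with the jump term to produce exactly the terms in \eqref{eq-L2}. This yields
\begin{equation*}
\mathbb{E}\langle P(\alpha_T)X(T),X(T)\rangle-\langle P(i)x,x\rangle=\mathbb{E}\int_0^T\Big[-|X|^2+2\langle X,\ell(t)\rangle+q(t)\Big]dt .
\end{equation*}
Here $\ell$ is linear in $b,\sigma,\gamma_j$ with coefficients bounded by constants depending on $\mathbf{P},C,E_j,\pi_{ij}$. The term $q$ collects $\langle P\sigma,\sigma\rangle$ and $\lambda_j\langle P(j)\gamma_j,\gamma_j\rangle$ (and similar terms), so $q$ is bounded by a constant times $|\sigma|^2+\sum_j\lambda_j|\gamma_j|^2$.

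The main obstacle is organising the cross terms so that they are controlled by exactly the norms appearing on the right of \eqref{FSDE-E}. In particular, the $\gamma_j$ terms must come with the weight $\lambda_j(t)$. They arise only through the jump and compensator terms, each of which carries the factor $\lambda_j(t)=\sum_{i\ne j}\pi_{ij}\mathbb{I}_{\{\alpha_{t-}=i\}}$. One has to check carefully that the $X$–$\gamma_j$ cross terms are of the form $2\lambda_j\langle X,M\gamma_j\rangle$ with $M$ bounded. Given that, Young's inequality with $\varepsilon=1/2$ gives
\begin{equation*}
2\lambda_j\langle X,M\gamma_j\rangle\le \tfrac{1}{4L}|X|^2+K\lambda_j|\gamma_j|^2 ,
\end{equation*}
using that $\lambda_j$ is bounded. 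Similarly, $2\langle X,\ell_{b,\sigma}\rangle\le\frac14|X|^2+K(|b|^2+|\sigma|^2)$. This leaves
\begin{equation*}
\lambda\,\mathbb{E}|X(T)|^2+\tfrac12\mathbb{E}\int_0^T|X|^2dt\le \mu|x|^2+K\,\mathbb{E}\int_0^\infty\Big[|b|^2+|\sigma|^2+\sum_j\lambda_j|\gamma_j|^2\Big]dt .
\end{equation*}

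Dropping the nonnegative first term and letting $T\to\infty$ by monotone convergence gives $X\in L^2_{\mathbb{F}}(\mathbb{R}^n)$ together with \eqref{FSDE-E}. Setting $b=\sigma=\gamma_j=0$ in this argument also recovers the sufficiency half of Proposition \ref{prop-L2}, since any $\mathbf{P}$ with strict inequality in \eqref{eq-L2} works after rescaling the Lyapunov bound.

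There is a circularity here: the proof of Proposition \ref{prop-L2} refers to this theorem for sufficiency, while this theorem needs its necessity half. The necessity proof is self-contained, so this is fine.
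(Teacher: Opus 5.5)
Your argument is correct and is essentially the paper's proof. Both take $\mathbf{P}$ from the self-contained necessity half of Proposition~\ref{prop-L2}, apply It\^o's formula to $\langle P(\alpha_t)X(t),X(t)\rangle$, and absorb the cross terms by Young's inequality, with the $\gamma_j$ terms carrying the weight $\lambda_j(t)$. The only difference is how the estimate is closed: you integrate the identity over $[0,T]$ against the normalized $-|X|^2$ term and drop the nonnegative terminal value, whereas the paper derives the differential inequality $\phi'\le-\frac{\delta}{2}\phi+\beta$ for $\phi(t)=\mathbb{E}|P(\alpha_t)^{1/2}X(t)|^2$ and integrates its variation-of-constants bound over $[0,\infty)$ via Fubini.
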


\begin{proof}
The uniqueness is clearly established. For any given $(x,i)\in\mathbb{R}^{n}\times\mathcal{S}$, since $[A,C,\mathbf{E}]_{\alpha}$ is $L^{2}$-stable, by Proposition \ref{prop-L2}, there exists a $\mathbf{P}=\left[P(1),P(2),...,P(L)\right]\in\mathcal{D}\left(\mathbb{S}_{+}^{n}\right)$ such that
 $$
 \begin{aligned}
 \Lambda(i)\triangleq & -\Big\{P(i)A(i)+A(i)^{\top}P(i)+C(i)^{\top}P(i)C(i)+\sum_{j\neq i}^{L}\pi_{ij}\big[(P(j)-P(i))E_{j}(i)+E_{j}(i)^{\top}(P(j)-P(i))\\
    &+E_{j}(i)^{\top}P(j)E_{j}(i)+P(j)-P(i)\big]\Big\}> 0,\quad i\in\mathcal{S}.
 \end{aligned}
 $$
Applying It\^o's rule to $\left<P(\alpha_{t})X(t),X(t)\right>$, we obtain
\begin{align*}
&\quad\mathbb{E}\left[\left<P(\alpha_{t})X(t),X(t)\right>-\left<P(i)x,x\right>\right]\\
%&=\mathbb{E}\int_{0}^{t}\Big[\big<\big[P(\alpha_{s})A(\alpha_{s})+A(\alpha_{s})^{\top}P(\alpha_{s})
%   +\sum_{j=1}^{L}\lambda_{j}(s)\big((P(j)-P(\alpha_{s}))E_{j}(\alpha_{s})\\
%   &\quad+E_{j}(\alpha_{s})^{\top}(P(j)-P(\alpha_{s}))+E_{j}(\alpha_{s})^{\top}P(j)+E_{j}(\alpha_{s})+P(j)-P(\alpha_{s})\big) \big]X(s),X(s)\big>\\
%&\quad+2\big<P(\alpha_{s})b(s)+\sum_{j=1}^{L}\lambda_{j}(s)\big[(P(j)-P(\alpha_{s}))\gamma_{j}(s)+E_{j}(\alpha_{s})^{\top}P(j)\gamma_{j}(s)\big],X(s)\big>\\
% &\quad  +\sum_{j=1}^{L}\lambda_{j}(s)\big<P(j)\gamma_{j}(s),\gamma_{j}(s)\big>\Big]ds\\
&=\mathbb{E}\int_{0}^{t}\Big[-\big<\Lambda(\alpha_{s}) X(s),X(s)\big>+\big<P(\alpha_{s})\sigma(s),\sigma(s)\big>+\sum_{j=1}^{L}\lambda_{j}(s)\big<P(j)\gamma_{j}(s),\gamma_{j}(s)\big>\\
&\quad+2\big<P(\alpha_{s})b(s)+C(\alpha_{s})^{\top}P(\alpha_{s})\sigma(s)+\sum_{j=1}^{L}\lambda_{j}(s)\big[(P(j)-P(\alpha_{s}))\gamma_{j}(s)
+E_{j}(\alpha_{s})^{\top}P(j)\gamma_{j}(s)\big],X(s)\big>\Big]ds.
\end{align*} 
Let
\begin{align*}
&\Psi(i)\triangleq P(i)^{-\frac{1}{2}}\Lambda(i)P(i)^{-\frac{1}{2}}>0, \\
&\eta(s)\triangleq P(\alpha_{s})^{-\frac{1}{2}}\Big[P(\alpha_{s})b(s)+C(\alpha_{s})^{\top}P(\alpha_{s})\sigma(s)+\sum_{j=1}^{L}\lambda_{j}(s)\big[(P(j)-P(\alpha_{s}))\gamma_{j}(s)
+E_{j}(\alpha_{s})^{\top}P(j)\gamma_{j}(s)\big]\Big].
\end{align*}
Then
\begin{align*}
&\quad\frac{d}{dt}\mathbb{E}\left[|P(\alpha_{t})^{\frac{1}{2}}X(t)|^{2}\right]
=\frac{d}{dt}\mathbb{E}\left[\left<P(\alpha_{t})X(t),X(t)\right>\right]\\
&=-\mathbb{E}\big[\big<\Psi(\alpha_{t})P(\alpha_{t})^{\frac{1}{2}}X(t),P(\alpha_{t})^{\frac{1}{2}}X(t)\big>\big]
+2\mathbb{E}\big[\big<\eta(t),P(\alpha_{t})^{\frac{1}{2}}X(t)\big>\big]
+\mathbb{E}\big[\big<P(\alpha_{t})\sigma(t),\sigma(t)\big>\big]\\
&\quad+\mathbb{E}\big[\sum_{j=1}^{L}\lambda_{j}(t)\big<P(j)\gamma_{j}(t),\gamma_{j}(t)\big>\big]\\
&\leq -\delta\mathbb{E}\big[|P(\alpha_{t})^{\frac{1}{2}}X(t)|^{2}\big]
+\frac{\delta}{2}\mathbb{E}\big[|P(\alpha_{t})^{\frac{1}{2}}X(t)|^{2}\big]
+\frac{2}{\delta}\mathbb{E}\big[|\eta(t)|^2\big]+\mu\mathbb{E}\big[\sum_{j=1}^{L}\lambda_{j}(t)|\gamma_{j}(t)|^2+|\sigma(t)|^2\big]\\
&=-\frac{\delta}{2}\mathbb{E}\big[|P(\alpha_{t})^{\frac{1}{2}}X(t)|^{2}\big]
+\frac{2}{\delta}\mathbb{E}\big[|\eta(t)|^2\big]+\mu\mathbb{E}\big[\sum_{j=1}^{L}\lambda_{j}(t)|\gamma_{j}(t)|^2+|\sigma(t)|^2\big],
\end{align*}
where $\delta >0$ is the smallest  eigenvalue of $\mathbf{\Psi}=\left[\Psi(1),\Psi(2),...,\Psi(L)\right]$ and $\mu>0$ is the largest eigenvalue of $\mathbf{P}=\left[P(1),P(2),...,P(L)\right]$.
Let $$\phi(t)\triangleq \mathbb{E}\left[|P(\alpha_{t})^{\frac{1}{2}}X(t)|^{2}\right], \quad
\beta(t)\triangleq \frac{2}{\delta}\mathbb{E}\big[|\eta(t)|^2\big]+\mu\mathbb{E}\big[\sum_{j=1}^{L}\lambda_{j}(t)|\gamma_{j}(t)|^2+|\sigma(t)|^2\big].$$
Then,
$$d\phi(t)\leq \left[-\frac{\delta}{2}\phi(t)+\beta(t)\right]dt,$$
which implies
$$\frac{d}{dt}\left[e^{\frac{\delta}{2}t}\phi(t)\right]
=e^{\frac{\delta}{2}t}\left[\frac{\delta}{2}\phi(t)+\phi'(t)\right]
\leq e^{\frac{\delta}{2}t}\beta(t).$$
Hence,
$$\phi(t) \leq \phi(0)e^{-\frac{\delta}{2}t}+\int_{0}^{t}e^{-\frac{\delta}{2}(t-s)}\beta(s)ds.$$
Integrating both sides of the above equation simultaneously yields
$$
\begin{aligned}
\int_{0}^{\infty}\phi(t)dt
&\leq\int_{0}^{\infty}\left[\phi(0)e^{-\frac{\delta}{2}t}+\int_{0}^{t}e^{-\frac{\delta}{2}(t-s)}\beta(s)ds\right]dt\\
&=\frac{2}{\delta}\phi(0)+\int_{0}^{\infty}\int_{s}^{\infty}e^{-\frac{\delta}{2}(t-s)}\beta(s)dtds\\
&=\frac{2}{\delta}\phi(0)+\frac{2}{\delta}\int_{0}^{\infty}\beta(s)ds.
\end{aligned}
$$
Therefore
$$
\begin{aligned}
&\quad\mathbb{E}\int_{0}^{\infty}|P(\alpha_{t})^{\frac{1}{2}}X(t)|^{2}dt=\int_{0}^{\infty}\phi(t)dt
\leq\frac{2}{\delta}\phi(0)+\frac{2}{\delta}\int_{0}^{\infty}\beta(t)dt\\
&=\frac{2}{\delta}|P(i)^{\frac{1}{2}}x|^{2}
+\frac{2}{\delta}\mathbb{E}\int_{0}^{\infty}\left[\frac{2}{\delta}|\eta(t)|^2+\mu\big(\sum_{j=1}^{L}\lambda_{j}(t)|\gamma_{j}(t)|^2+|\sigma(t)|^2\big)\right]dt.
\end{aligned}
$$
Noting that
$$|\eta(t)|^2\leq M\left[|b(t)|^2+|\sigma(t)|^2+\sum_{j=1}^{L}\lambda_{j}(t)|\gamma_{j}(t)|^2\right],\qquad \text{for some } M>0,$$
The desired result can be obtained from the above equation.
\end{proof}

In the end of this section, we introduce the concept of pseudo-inverse (see Penrose \cite{Penrose.1955}). For any $M\in\mathbb{R}^{m\times n}$, there exists a unique matrix $M^{\dag
}\in\mathbb{R}^{n\times m}$, called the pseudo-inverse of M, such that
$$MM^{\dag}M=M,\qquad M^{\dag}MM^{\dag}=M^{\dag},\qquad (MM^{\dag})^{\top}=MM^{\dag},\qquad 
(M^{\dag}M)^{\top}=M^{\dag}M.$$
In addition, if $M\in\mathbb{S}^{n}$, then $M^{\dag}\in\mathbb{S}^{n}$, and 
$$MM^{\dag}=M^{\dag}M,\qquad M\geq 0\Leftrightarrow M^{\dag}\geq 0.$$

\begin{lemma}[Extended Schur's lemma \cite{Albert1.969}]\label{lem-Schur}
Let $M\in\mathbb{S}^{n}$, $N\in\mathbb{S}^{m}$, $L\in\mathbb{R}^{n\times m}$. Then the following conditions are equivalent:
\begin{description}
  \item[(i)] $M-LN^{\dag}L^{\top}\geq 0$, $N\geq 0$, and $L(I-NN^{\dag})=0$.
  \item[(ii)] $\left(\begin{array}{cc}
       M &  L\\
       L^{\top}& N 
  \end{array}\right)\geq 0$.
\end{description}
\end{lemma}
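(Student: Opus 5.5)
The plan is to reduce everything to the unconstrained Schur complement and handle the singular part of $N$ through the orthogonal projector $NN^{\dag}$. Throughout I use only the four Penrose identities, $N^{\dag}\in\mathbb{S}^{m}$, $NN^{\dag}=N^{\dag}N$, and the fact $N\ge 0\Leftrightarrow N^{\dag}\ge 0$ recalled just before Lemma \ref{lem-Schur}.

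For (i)$\Rightarrow$(ii), I would write down an explicit congruence. Set
$$T=\begin{pmatrix} I & -LN^{\dag}\\ 0 & I\end{pmatrix}.$$
I will compute $T\begin{pmatrix} M & L\\ L^{\top} & N\end{pmatrix}T^{\top}$.

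The $(1,2)$ block is $L-LN^{\dag}N=L(I-N^{\dag}N)=L(I-NN^{\dag})$, which vanishes by hypothesis. The $(2,1)$ block is its transpose.

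For the $(1,1)$ block I expand
$$M-LN^{\dag}L^{\top}-LN^{\dag}L^{\top}+LN^{\dag}NN^{\dag}L^{\top}=M-LN^{\dag}L^{\top},$$
using $N^{\dag}NN^{\dag}=N^{\dag}$ and the symmetry of $N^{\dag}$.

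The transformed matrix is therefore block diagonal, $\mathrm{diag}(M-LN^{\dag}L^{\top},N)$, and it is $\ge 0$ by (i). Since $T$ is invertible, congruence preserves semidefiniteness, which gives (ii).

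For (ii)$\Rightarrow$(i), I will first get $N\ge0$ directly, since it is a principal submatrix of a positive semidefinite matrix.

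The key step, and the only real obstacle, is $L(I-NN^{\dag})=0$. I will take any $y\in\ker N$ and test the quadratic form at $(x,ty)$:
$$\langle Mx,x\rangle+2t\langle Ly,x\rangle\ge0\quad\text{for all } t\in\mathbb{R},\ x\in\mathbb{R}^{n}.$$
Letting $t\to\pm\infty$ forces $\langle Ly,x\rangle=0$ for every $x$, so $Ly=0$ and hence $\ker N\subset\ker L$. Because $N$ is symmetric, $I-NN^{\dag}$ is the orthogonal projector onto $\ker N$, so $L(I-NN^{\dag})=0$.

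With that relation in hand, the same invertible congruence $T$ applies, and (ii) yields that $\mathrm{diag}(M-LN^{\dag}L^{\top},N)\ge0$. In particular $M-LN^{\dag}L^{\top}\ge0$, which completes (i). If the paper treats this lemma as standard, I would simply cite Albert \cite{Albert1.969}, but the argument above is self-contained.
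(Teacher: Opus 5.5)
Your proof is correct and complete. The paper gives no proof of Lemma \ref{lem-Schur} at all; it simply quotes the result from Albert \cite{Albert1.969}. Your argument therefore supplies a self-contained verification that uses only the Penrose identities.

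The congruence by $T=\begin{pmatrix} I & -LN^{\dag}\\ 0 & I\end{pmatrix}$ is the matrix form of the completion-of-squares identity $M+L\Theta+\Theta^{\top}L^{\top}+\Theta^{\top}N\Theta=(M-LN^{\dag}L^{\top})+(\Theta+N^{\dag}L^{\top})^{\top}N(\Theta+N^{\dag}L^{\top})$. The paper uses that identity, without comment, in the necessity part of the proof of Theorem \ref{thm-SLQ_0-closed}. In both places the range condition $L(I-NN^{\dag})=0$ is exactly what kills the cross terms, while your $(1,1)$-block computation holds unconditionally.

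You also handle the one ordering subtlety correctly. In (ii)$\Rightarrow$(i) you first obtain $\ker N\subset\ker L$ from the test vector $(x,ty)$, whose quadratic form is linear in $t$, and only then apply the congruence. That is the direction the paper actually invokes, to pass from the $2\times 2$ block inequality to \eqref{SLQ_0-closed-p2}--\eqref{SLQ_0-closed-p3}.

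Citing Albert buys brevity. Your route buys an elementary proof that a reader can check in a few lines.
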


%\begin{remark}\rm
%We point out that the condition $L(I-NN^{\dag})=0$ is equivalent to $\mathcal{R}(L^{\top})\subseteq \mathcal{R}(N)$, where $\mathcal{R}(\Lambda)$ represents the range of matrix $\Lambda$.
%\end{remark}

\section{Main results}\label{sec-3}
In this section, we study the closed-loop solvability for Problem (M-SLQ)$_{\infty}$.  For simplicity, we specify that system $\left[A,C,\mathbf{E};B,D,\mathbf{F}\right]_{\alpha}$ represents the controlled SDE \eqref{intro-state}. An element  
$
\mathbf{\Theta}=\left[\Theta(1),\Theta(2),...,\Theta(L)\right]\in\mathcal{D}\left(\mathbb{R}^{m\times n}\right)
$
is called a %$L^{2}$-stabilizer 
closed-loop strategy of system $\left[A,C,\mathbf{E};B,D,\mathbf{F}\right]_{\alpha}$ if and only if the following closed-loop system admits a unique solution $X(\cdot)\equiv X(\cdot;x,i,\mathbf{\Theta})\in L_{\mathbb{F}}^{2}(\mathbb{R}^{n})$ for any given initial value $(x,i)\in \mathbb{R}^{n}\times\mathcal{S}$:
\begin{equation}\label{state-closed}
  \left\{
 \begin{aligned}
   dX(t)&=\left[A\left(\alpha_{t}\right)+B\left(\alpha_{t}\right)\Theta\left(\alpha_{t}\right)\right]X(t)dt
   +\left[C\left(\alpha_{t}\right)+D\left(\alpha_{t}\right)\Theta\left(\alpha_{t}\right)\right]X(t)dW(t)\\
   &\quad+\sum_{j=1}^{L}\left[E_{j}\left(\alpha_{t-}\right)+F_{j}\left(\alpha_{t-}\right)\Theta\left(\alpha_{t-}\right)\right]X(t-)d\widetilde{N}_{j}(t),\qquad t\geq0,\\
   X(0)&=x,\quad \alpha(0)=i.
   \end{aligned}
  \right.
\end{equation}
In the following, we denote $\mathcal{H}_{\alpha}\equiv \mathcal{H}\left[A,C,\mathbf{E};B,D,\mathbf{F}\right]_{\alpha}$ as the set containing all %$L^{2}$-stabilizers 
closed-loop strategy of system $\left[A,C,\mathbf{E};B,D,\mathbf{F}\right]_{\alpha}$, and define 
$$
u(t;x,i,\mathbf{\Theta})=\Theta(\alpha_{t-})X(t;x,i,\mathbf{\Theta}),\qquad t\geq 0,
$$
as the outcome of closed-loop strategy $\mathbf{\Theta}\in \mathcal{H}_{\alpha}$.
%called the system $\left[A,C,\mathbf{E};B,D,\mathbf{F}\right]_{\alpha}$ is $L^{2}$-stabilizable if $\mathcal{H}_{\alpha}\neq\emptyset$. 
Now, we introduce the concept of closed-loop solvability  for Problem (M-SLQ)$_{\infty}$ as follows.

%and define
%$$\mathcal{H}_{\alpha}\triangleq
%\left\{\mathbf{\Theta}=\left[\Theta(1),\Theta(2),...,\Theta(L)\right]\in\mathcal{D}\left(\mathbb{R}^{m\times n}\right)\Big|\left[A+B\Theta,\mathbf{E}+\mathbf{F}\Theta\right]_{\alpha} \text{ is } L^{2}\text{- stable}.\right\}.$$
%
%With the above notations, we introduce the following definitions.
%
%\begin{definition}
%The system $\left[A,\mathbf{E};B,\mathbf{F}\right]_{\alpha}$ is said to be $L^{2}$-stabilizable if and only if $\mathcal{H}_{\alpha}\neq\emptyset.$ Any element $\Theta\in\mathcal{H}_{\alpha}$ is called a stabilizer of system $\left[A,\mathbf{E};B,\mathbf{F}\right]_{\alpha}$.
%\end{definition}

\begin{definition}
\begin{description}
  \item[(i)] An element  $\widehat{\mathbf{\Theta}}\in\mathcal{H}_{\alpha}$ is called a  closed-loop optimal strategy of Problem (M-SLQ) if for any $\left(x,i\right)\in \mathbb{R}^{n}\times\mathcal{S}$, the following holds:
  \begin{equation}\label{closed}
   J(x,i;u(\cdot;x,i,\widehat{\mathbf{\Theta}}))\leq J(x,i;u(\cdot)), \quad \forall u(\cdot)\in  \mathcal{U}_{ad}(x,i).
  \end{equation}
  \item[(ii)] Problem (M-SLQ)$_{\infty}$ is said to be (uniquely) closed-loop solvable if it has a (unique)  closed-loop optimal strategy $\widehat{\mathbf{\Theta}}\in\mathcal{H}_{\alpha}$.
\end{description}
\end{definition}

In the rest of the paper, we suppose the following assumption always holds.

\textbf{(H1).} The closed-loop strategy set is non-empty, i.e, $\mathcal{H}_{\alpha}\neq\emptyset$.

\begin{remark}\rm
Obviously, if system $\left[A,C,\mathbf{E}\right]_{\alpha}$ is $L^{2}$-stable, then $\mathbf{0}\in\mathcal{H}_{\alpha}$, which implies that the closed-loop strategy set is non-empty. 
On the other hand, for any $\mathbf{\Theta}\in \mathcal{H}_{\alpha}$, Theorem \ref{thm-FSDE} implies that the outcome $u(\cdot;x,i,\mathbf{\Theta})$ of closed-loop strategy $\mathbf{\Theta}$ is in $\mathcal{U}_{ad}(x,i)$. Consequently, the assumption (H1) further implies that the set $\mathcal{U}_{ad}(x,i)$ is nonempty for any $(x,i)$. % The following proposition provides a more profound characterization of $\mathcal{U}_{ad}(x,i)$.
\end{remark}

To simplify our subsequent analysis, for any given $\mathbf{P}=\left[P(1),P(2),...,P(L)\right]\in\mathcal{D}\left(\mathbb{S}^{n}\right)$, we introduce the following notations
\begin{equation}\label{MLN}
    \begin{aligned}
    &\mathcal{M}(P,i)\triangleq P(i)A(i)+A(i)^{\top}P(i)+C(i)^{\top}P(i)C(i)+Q(i)+\sum_{j\neq i}^{L}\pi_{ij}\big[(P(j)-P(i))E_{j}(i)\\
    &\qquad\qquad+E_{j}(i)^{\top}(P(j)-P(i))+E_{j}(i)^{\top}P(j)E_{j}(i)+P(j)-P(i)\big],\\
    &\mathcal{L}(P,i)\triangleq
    P(i)B(i)+C(i)^{\top}P(i)D(i)+S(i)^{\top}+\sum_{j\neq i}^{L}\pi_{ij}\big[(P(j)-P(i))F_{j}(i)+E_{j}(i)^{\top}P(j)F_{j}(i)\big],\\
    &\mathcal{N}(P,i)\triangleq
    R(i)+D(i)^{\top}P(i)D(i)+\sum_{j\neq i}^{L}\pi_{ij}F_{j}(i)^{\top}P(j)F_{j}(i).
    \end{aligned}
\end{equation}
The following Lemma  provides us with another form of cost functional \eqref{intro-cost}.
\begin{lemma}\label{lem-useful}
 For any $\mathbf{P}=\left[P(1),P(2),...,P(L)\right]\in\mathcal{D}\left(\mathbb{S}^{n}\right)$,  the cost functional \eqref{intro-cost} admits the following representation:
 \begin{equation}\label{cost-useful-1}
\begin{aligned}
   &\quad J(x,i;u(\cdot))=\big<P(i)x,x\big>+\mathbb{E}
    \int_{0}^{\infty}
    \left<
    \left[
    \begin{array}{cc}
    \mathcal{M}(P,\alpha_{t})&\mathcal{L}(P,\alpha_{t})\\
    \mathcal{L}(P,\alpha_{t})^{\top}&\mathcal{N}(P,\alpha_{t})
    \end{array}
    \right]
    \left[
    \begin{array}{c}
    X(t)\\
    u(t)
    \end{array}
    \right],
    \left[
    \begin{array}{c}
    X(t)\\
    u(t)
    \end{array}
    \right]
    \right>dt,
  \end{aligned}
\end{equation}
where the $X(\cdot)$ is the solution of SDE \eqref{intro-state}.
\end{lemma}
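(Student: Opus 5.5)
The plan is to apply It\^o's formula for jump processes to $\langle P(\alpha_t)X(t),X(t)\rangle$ on $[0,T]$. We then take expectations, add the running cost, and let $T\to\infty$. Throughout, $u(\cdot)\in\mathcal{U}_{ad}(x,i)$, which is implicit for $J$ to be well defined. Hence $X(\cdot)\in L^2_{\mathbb{F}}(\mathbb{R}^n)$ and $u(\cdot)\in L^2_{\mathcal{P}}(\mathbb{R}^m)$.

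\textbf{Step 1 (the It\^o computation).} The process $P(\alpha_t)$ is a pure-jump process. At a jump time of $N_j$ with $\alpha_{t-}=i$ we have $\alpha_t=j$ and $X(t)=X(t-)+E_j(i)X(t-)+F_j(i)u(t)$. The jump of $\langle P(\alpha)X,X\rangle$ at such a time is therefore
\[
\big\langle P(j)\big[(I+E_j(i))X(t-)+F_j(i)u(t)\big],(I+E_j(i))X(t-)+F_j(i)u(t)\big\rangle-\langle P(i)X(t-),X(t-)\rangle .
\]
Write the dynamics as $dN_j=d\widetilde N_j+\lambda_j\,dt$. The compensator part of the jump term cancels the $-\sum_j\lambda_j[E_jX+F_ju]$ drift contribution coming from the $d\widetilde N_j$ integrand. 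Expanding the square then gives the $dt$-integrand
\[
\langle (PA+A^\top P+C^\top PC)X,X\rangle+2\langle (PB+C^\top PD)u,X\rangle+\langle D^\top PDu,u\rangle+\sum_{j\ne\alpha_t}\pi_{\alpha_t j}\Big[\langle (P(j)-P(\alpha_t)+(P(j)-P(\alpha_t))E_j+E_j^\top(P(j)-P(\alpha_t))+E_j^\top P(j)E_j)X,X\rangle
\]
\[
\qquad+2\langle ((P(j)-P(\alpha_t))F_j+E_j^\top P(j)F_j)u,X\rangle+\langle F_j^\top P(j)F_ju,u\rangle\Big].
\]
Here we use $\lambda_j(t)=\pi_{\alpha_{t-}j}$ for $j\neq\alpha_{t-}$, and $\lambda_j=0$ for $j=\alpha_{t-}$. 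The cross term in $u$ carries $-P(i)$ from the compensator and $+P(j)$ from the jump, which is how $(P(j)-P(i))F_j$ appears. The $dW$ and $d\widetilde N_j$ integrals are true martingales on $[0,T]$, since $X$ and $u$ are square integrable on finite intervals. Adding $\langle QX,X\rangle+2\langle SX,u\rangle+\langle Ru,u\rangle$ to this integrand reproduces exactly the quadratic form with blocks $\mathcal{M},\mathcal{L},\mathcal{N}$ of \eqref{MLN}. We therefore obtain
\[
\mathbb{E}\langle P(\alpha_T)X(T),X(T)\rangle-\langle P(i)x,x\rangle+\mathbb{E}\int_0^T\langle\!\langle\cdot\rangle\!\rangle_{Q,S,R}\,dt=\mathbb{E}\int_0^T\langle\!\langle\cdot\rangle\!\rangle_{\mathcal{M},\mathcal{L},\mathcal{N}}\,dt .
\]

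\textbf{Step 2 (passing to the limit, the main obstacle).} Both integrals converge as $T\to\infty$ by dominated convergence, because $X$ and $u$ lie in $L^2$ and the coefficients are bounded. It remains to show $\mathbb{E}|X(T)|^2\to0$. Since $\mathbb{E}\int_0^\infty|X|^2dt<\infty$, we have $\liminf_{T\to\infty}\mathbb{E}|X(T)|^2=0$, so one could simply pass along a sequence $T_k\to\infty$. To get the full limit, apply It\^o's formula to $|X(t)|^2$. This shows $\frac{d}{dt}\mathbb{E}|X(t)|^2\le K\,\mathbb{E}(|X(t)|^2+|u(t)|^2)$, which is integrable on $[0,\infty)$. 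Hence $\mathbb{E}|X(t)|^2$ has a limit, and this limit must be $0$. Letting $T\to\infty$ then yields \eqref{cost-useful-1}.
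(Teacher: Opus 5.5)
Your proposal is correct and follows essentially the same route as the paper: It\^o's formula for $\langle P(\alpha_t)X(t),X(t)\rangle$ on $[0,T]$, taking expectations, and letting $T\to\infty$ using $\mathbb{E}\langle P(\alpha_T)X(T),X(T)\rangle\to 0$. Your Step 2 supplies the justification of this last limit, which the paper only asserts from $L^2$-stability: the integrable bound on $\frac{d}{dt}\mathbb{E}|X(t)|^2$ forces $\mathbb{E}|X(t)|^2$ to converge, hence to $0$. One small imprecision: the $dW$-integral is a true martingale because of the standard estimate $\mathbb{E}\sup_{t\le T}|X(t)|^2<\infty$, not merely because $X$ and $u$ are square integrable on $[0,T]$.
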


\begin{proof}
By applying It\^o's formula to $\left<P(\alpha_{t})X(t),X(t)\right>$, we have
 \begin{equation}\label{useful}
 \begin{aligned}
   &\quad\mathbb{E}\left[\left<P(\alpha_{T})X(T),X(T)\right>-\left<P(i)x,x\right>\right]\\
   &=\mathbb{E}\int_{0}^{T}\Big\{
   \big<\big[\mathcal{M}(P,\alpha_{t})-Q(\alpha_{t})\big]X(t),X(t)\big>
   +2\big<\big[\mathcal{L}(P,\alpha_{t})-S(\alpha_{t})^{\top}\big]u(t),X(t)\big>\\
   &\qquad\qquad+\big<\big[\mathcal{N}(P,\alpha_{t})-R(\alpha_{t})\big]u(t),u(t)\big>\Big\}dt.
   \end{aligned}
 \end{equation}
 Noting that the $L^{2}$-stability of state process $X(\cdot)$ yields $\lim_{T\rightarrow +\infty}\mathbb{E}\left[\left<P(\alpha_{T})X(T),X(T)\right>\right]=0.$
Hence, letting $T\rightarrow +\infty$, we have 
$$
 \begin{aligned}
0&=\left<P(i)x,x\right>+\mathbb{E}\int_{0}^{\infty}\Big\{
   \big<\big[\mathcal{M}(P,\alpha_{t})-Q(\alpha_{t})\big]X(t),X(t)\big>
   +2\big<\big[\mathcal{L}(P,\alpha_{t})-S(\alpha_{t})^{\top}\big]u(t),X(t)\big>\\
   &\quad+\big<\big[\mathcal{N}(P,\alpha_{t})-R(\alpha_{t})\big]u(t),u(t)\big>\Big\}dt.
\end{aligned}$$
 The desired result follows substituting the above equation into the cost functional \eqref{intro-cost}. 
\end{proof}

\begin{remark}\label{rmk-useful}\rm
Let $\mathbf{\Theta}\in \mathcal{H}_{\alpha}$ be a closed-loop strategy, and $u(\cdot;x,i,\mathbf{\Theta})$ be its outcome control at initial pair $(x,i)\in\mathbb{R}^{n}\times\mathcal{S}$. Then one can similarly derives the following result:
\begin{equation}\label{cost-useful-2}
\begin{aligned}
   &J(x,i;u(\cdot;x,i,\mathbf{\Theta}))=\big<P(i)x,x\big>+\mathbb{E}
    \int_{0}^{\infty}
    \Big<
    \big[\mathcal{M}(P,\alpha_{t})+\mathcal{L}(P,\alpha_{t})\Theta(\alpha_{t})+\Theta(\alpha_{t})^{\top}\mathcal{L}(P,\alpha_{t})^{\top}\\
   &\quad +\Theta(\alpha_{t})^{\top}\mathcal{N}(P,\alpha_{t})\Theta(\alpha_{t})\big]X(t), X(t) \Big>dt.
  % &=\big<P(i)x,x\big>+\mathbb{E}
%    \int_{0}^{\infty}
%    \Big\{\Big<\big[\mathcal{M}(P,\alpha_{t})-\mathcal{L}(P,\alpha_{t})\mathcal{N}(P,\alpha_{t})^{\dag}\mathcal{L}(P,\alpha_{t})\big]X(t), X(t) \Big>\\
%    &\quad+\Big<\mathcal{N}(P,\alpha_{t})\big[\Theta(\alpha_{t})+\mathcal{N}(P,\alpha_{t})^{\dag}\mathcal{L}(P,\alpha_{t})^{\top}\big]X(t),
%    \big[\Theta(\alpha_{t})+\mathcal{N}(P,\alpha_{t})^{\dag}\mathcal{L}(P,\alpha_{t})^{\top}\big]X(t)\Big>\Big\}dt.
  \end{aligned}
\end{equation}
\end{remark}

In the following, we study the closed-loop solvability of Problem (M-SLQ)$_{\infty}$. We first consider the following CAREs:
\begin{equation}\label{CAREs-SLQ}
\left\{
   \begin{aligned}
    &\mathcal{M}(P,i)-\mathcal{L}(P,i) \mathcal{N}(P,i)^{\dag} \mathcal{L}(P,i)^{\top} = 0,\\
    & \mathcal{L}(P,i)\left[I-\mathcal{N}(P,i)\mathcal{N}(P,i)^{\dag}\right]=0,\\ 
    & \mathcal{N}(P,i)\geq 0,\quad\forall i\in\mathcal{S}.
   \end{aligned} 
   \right.
\end{equation}

\begin{definition}\label{def-stabilizing-solution-CAREs}
An element $\mathbf{P}\in\mathcal{D}\left(\mathbb{S}^{n}\right)$ is called a static stabilizing solution of \eqref{CAREs-SLQ} if $\mathbf{P}$ solves CAREs \eqref{CAREs-SLQ}, and there exists $\mathbf{\Pi}\in\mathcal{D}\left(\mathbb{R}^{m\times n}\right)$ such that
\begin{equation}\label{CAREs-ZLQ-stabilizer}
\mathcal{K}(\mathbf{\Pi})=\left(\mathcal{K}\left(\Pi(1)\right),\mathcal{K}\left(\Pi(2)\right),\cdots,\mathcal{K}\left(\Pi(L)\right)\right)\in\mathcal{H}_{\alpha},
\end{equation}
where 
$$\mathcal{K}\left(\Pi(i)\right)\triangleq-\mathcal{N}(P,i)^{\dag}\mathcal{L}(P,i)^{\top}+\left[I-\mathcal{N}(P,i)^{\dag}\mathcal{N}(P,i)\right]\Pi(i),\qquad i\in\mathcal{S}.$$
\end{definition}

The following theorem provides an equivalent characterization for the closed-loop solvable of Problem (M-SLQ).

\begin{theorem}\label{thm-SLQ_0-closed}
The Problem (M-SLQ)$_{\infty}$ is closed-loop solvable if and only if the CAREs \eqref{CAREs-SLQ} admits a static stabilizing solution $\mathbf{P}=\left[P(1),P(2),...,P(L)\right]\in\mathcal{D}\left(\mathbb{S}^{n}\right)$. 
In this case, the closed-loop optimal strategy $\widehat{\mathbf{\Theta}}=\left[\widehat{\Theta}(1),\widehat{\Theta}(2),\cdots,\widehat{\Theta}(L)\right]\in\mathcal{H}_{\alpha}$ admits the following representation:
\begin{equation}\label{eq-Theta}
  \widehat{\Theta}(i)=-\mathcal{N}(P,i)^{\dag}\mathcal{L}(P,i)^{\top}+\big[I-\mathcal{N}(P,i)^{\dag}\mathcal{N}(P,i)\big]\Pi(i),\quad \forall i\in\mathcal{S},
\end{equation}
where $\mathbf{\Pi}=\left(\Pi(1),\Pi(2),\cdots,\Pi(L)\right)\in\mathcal{D}\left(\mathbb{R}^{m\times n}\right)$ is chosen such that $\widehat{\mathbf{\Theta}}\in\mathcal{H}_{\alpha}$. Moreover, the value function is given by
\begin{equation}\label{eq-value-function}
      V(x,i)=\big<P(i)x,x\big>.
\end{equation}
\end{theorem}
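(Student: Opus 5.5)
Sufficiency is a completion-of-squares argument based on Lemma \ref{lem-useful}; necessity follows the Sun--Yong route, using Remark \ref{rmk-useful} together with a Lyapunov/Riccati identification.

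\emph{Sufficiency.} Let $\mathbf{P}$ be a static stabilizing solution of \eqref{CAREs-SLQ}, and let $\widehat{\mathbf{\Theta}}$ be given by \eqref{eq-Theta}, with $\mathbf{\Pi}$ chosen so that $\widehat{\mathbf{\Theta}}\in\mathcal{H}_{\alpha}$. Take any $u\in\mathcal{U}_{ad}(x,i)$. Then $X\in L^2_{\mathbb{F}}$, so Lemma \ref{lem-useful} applies. Using $\mathcal{L}(I-\mathcal{N}\mathcal{N}^{\dag})=0$, i.e. $\mathcal{L}=\mathcal{L}\mathcal{N}^{\dag}\mathcal{N}$, and the first equation of \eqref{CAREs-SLQ}, the integrand equals
$$\big<\mathcal{N}(P,\alpha_t)\big(u(t)-\widehat{\Theta}(\alpha_t)X(t)\big),\,u(t)-\widehat{\Theta}(\alpha_t)X(t)\big>.$$
To verify this, note that $\mathcal{N}\widehat{\Theta}=-\mathcal{N}\mathcal{N}^{\dag}\mathcal{L}^{\top}=-\mathcal{L}^{\top}$ and $\widehat{\Theta}^{\top}\mathcal{N}\widehat{\Theta}=\mathcal{L}\mathcal{N}^{\dag}\mathcal{L}^{\top}$. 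Since $\mathcal{N}\ge0$, we get $J(x,i;u)\ge\langle P(i)x,x\rangle$. Equality holds for $u=\widehat{\Theta}(\alpha_{t-})X$, which equals $\widehat{\Theta}(\alpha_t)X$ for a.e.\ $t$. Hence $\widehat{\mathbf{\Theta}}$ is closed-loop optimal and $V(x,i)=\langle P(i)x,x\rangle$.

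\emph{Necessity.} Let $\widehat{\mathbf{\Theta}}\in\mathcal{H}_{\alpha}$ be closed-loop optimal. The closed-loop system $[A+B\widehat\Theta,C+D\widehat\Theta,\mathbf{E}+\mathbf{F}\widehat\Theta]_\alpha$ is then $L^2$-stable. By the argument in the proof of Proposition \ref{prop-L2}, with $\Lambda$ replaced by the (possibly indefinite) weight $\begin{pmatrix}I\\ \widehat\Theta\end{pmatrix}^{\top}\begin{pmatrix}Q&S^\top\\ S&R\end{pmatrix}\begin{pmatrix}I\\ \widehat\Theta\end{pmatrix}$, we define
$$P(i)=\mathbb{E}\Big[\int_0^\infty \widehat\Phi_i^{\top}\big(Q+S^\top\widehat\Theta+\widehat\Theta^\top S+\widehat\Theta^\top R\widehat\Theta\big)(\alpha_t)\,\widehat\Phi_i\,dt\Big].$$
Convergence follows from $L^2$-stability of $\widehat\Phi_i$ rather than from monotonicity. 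This $\mathbf{P}$ solves the Lyapunov-type equation
$$\mathcal{M}(P,i)+\mathcal{L}(P,i)\widehat\Theta(i)+\widehat\Theta(i)^\top\mathcal{L}(P,i)^\top+\widehat\Theta(i)^\top\mathcal{N}(P,i)\widehat\Theta(i)=0,$$
and $J(x,i;\widehat u)=\langle P(i)x,x\rangle$ by Remark \ref{rmk-useful}.

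Next, perturb the control. Since $\widehat\Theta$ is stabilizing, Theorem \ref{thm-FSDE} shows that for any $v\in L^2_{\mathcal P}$ the control $u=\widehat\Theta X+v$, with $X$ the solution of the closed-loop system driven by $v$, is admissible. Rewriting the cost via Lemma \ref{lem-useful} and the Lyapunov identity, the cost difference is
$$J(x,i;\widehat\Theta X+v)-J(x,i;\widehat u)=\mathbb{E}\int_0^\infty\Big[2\big<(\mathcal{L}+\widehat\Theta^\top\mathcal{N})^\top X^{v}_0\!,\,v\big>+\big<\mathcal{N}v,v\big>\Big]dt,$$
where $X^{v}_0$ here should really be the full state; expand by linearity into $X=\widehat X+X^{(0)}_v$. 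Optimality for all $(x,i)$ and all $v$ must force $\mathcal{N}(P,i)\ge0$ and $\mathcal{L}(P,i)+\widehat\Theta(i)^\top\mathcal{N}(P,i)=0$.

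\emph{Main obstacle.} The difficulty is extracting these pointwise matrix conditions from the integrated inequality. I would take $v=\varepsilon\,\mathbb{I}_{[0,h]}(t)\,\mathbb{I}_{\{\alpha_t=k\}}v_0$ for a constant $v_0$. Taking $x=0$, the state $X^{(0)}_v$ is $O(\varepsilon h^{1/2})$-small and its contribution to the quadratic term is of lower order in $h$. Dividing by $h$ and letting $h\to0$ gives $\mathcal{N}(P,k)\ge0$ for each $k$ reachable with positive probability, which by irreducibility covers all of $\mathcal{S}$. The first-order condition in $\varepsilon$ at general $x$ gives
$$\mathbb{E}\int_0^h\big<(\mathcal{L}+\widehat\Theta^\top\mathcal{N})^\top\widehat X,\,v_0\big>\,dt=0$$
up to terms from the response of the state $X^{(0)}_v$. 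Here one must check that these cross terms are $o(h)$: they vanish to first order because $\mathbf{P}$ solves the closed-loop Lyapunov equation, which is exactly why the identity above has no extra state-response term. Letting $h\to0$ then yields $(\mathcal{L}+\widehat\Theta^\top\mathcal{N})^\top(i)x=0$ for all $x$.

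Finally, $\mathcal{L}^\top=-\mathcal{N}\widehat\Theta$ implies that the range condition holds and that $\widehat\Theta=-\mathcal{N}^\dag\mathcal{L}^\top+(I-\mathcal{N}^\dag\mathcal{N})\widehat\Theta$, which is the form \eqref{eq-Theta} with $\Pi=\widehat\Theta$. Substituting this into the Lyapunov equation gives $\mathcal{M}-\mathcal{L}\mathcal{N}^\dag\mathcal{L}^\top=0$. Hence $\mathbf{P}$ is a static stabilizing solution of \eqref{CAREs-SLQ}.
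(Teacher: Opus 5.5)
Your proposal is correct. For necessity it takes a genuinely different route from the paper. Sufficiency is the same completion of squares: the paper rewrites the state in terms of $\nu=u-\widehat\Theta X$ before expanding, while you factor the integrand of Lemma~\ref{lem-useful} directly.

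For necessity the paper defines $\mathbf P$ exactly as you do, but then argues by dynamic programming. It runs an arbitrary control on $[0,t]$ and the optimal feedback afterwards, which gives $\langle P(i)x,x\rangle\le\mathbb{E}\big[\int_0^t(\cdots)ds+\langle P(\alpha_t)X(t),X(t)\rangle\big]$. After It\^o's formula, division by $t$ and $t\downarrow0$, this yields the whole block inequality $\left(\begin{smallmatrix}\mathcal M&\mathcal L\\ \mathcal L^{\top}&\mathcal N\end{smallmatrix}\right)(P,i)\ge0$ at once. Lemma~\ref{lem-Schur} then gives $\mathcal N\ge0$, the range condition and $\mathcal M-\mathcal L\mathcal N^{\dag}\mathcal L^{\top}\ge0$. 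Finally, Remark~\ref{rmk-useful} writes $V(x,i)-\langle P(i)x,x\rangle$ as a sum of two nonnegative integrals, both of which must vanish.

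You bypass both the dynamic programming principle and the extended Schur lemma. You first obtain the closed-loop Lyapunov identity for $\mathbf P$, using the limit argument of Proposition~\ref{prop-L2} with dominated convergence in place of monotonicity. You then read off $\mathcal N\widehat\Theta+\mathcal L^{\top}=0$ and $\mathcal N\ge0$ as first- and second-order conditions for the perturbations $\widehat\Theta X+\varepsilon v$, which are admissible by Theorem~\ref{thm-FSDE}. After that, the Riccati equation, the range condition and the form \eqref{eq-Theta} with $\Pi=\widehat\Theta$ are pure algebra.

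The paper's route is shorter, but it leaves implicit the concatenation/Markov step behind the dynamic programming inequality. It also leaves implicit the passage from $\langle[\mathcal M-\mathcal L\mathcal N^{\dag}\mathcal L^{\top}](\alpha_t)X(t),X(t)\rangle=0$ a.e.\ to a matrix identity. Yours is more self-contained, at the cost of the Lyapunov identity and a small-time bound on the state response.

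Two small corrections.

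First, the coefficient of $\varepsilon$ in $J(x,i;\widehat\Theta X+\varepsilon v)-J(x,i;\widehat u)$ is exactly $2\mathbb{E}\int_0^\infty\langle G(\alpha_t)\widehat X(t),v(t)\rangle dt$, where $G=\mathcal L^{\top}+\mathcal N\widehat\Theta$. The state's response to $v$ enters only at order $\varepsilon^2$, so there are no cross terms to estimate at first order. Choosing $v(t)=G(\alpha_{t-})\widehat X(t-)$ directly gives $G(\alpha_t)\widehat X(t)=0$ a.e.\ a.s., hence $G(i)x=0$ by letting $t\downarrow0$.

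Second, with $v$ supported on $[0,h]$ and $h\to0$, you only recover $\mathcal N(P,k)\ge0$ for $k=\alpha_0$, because the normalized occupation time of any other state tends to zero. Irreducibility is not what closes this gap. Instead, closed-loop optimality holds for every initial regime, so simply take $\alpha_0=k$.
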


\begin{proof}
\textbf{Necessity.} Let $\widehat{\mathbf{\Theta}}=[\widehat{\Theta}(1),\widehat{\Theta}(2),...,\widehat{\Theta}(L)]\in\mathcal{H}_{\alpha}$ be the closed-loop optimal strategy of Problem (M-SLQ) and $\Phi(\cdot)$ satisfies the following SDE:
\begin{equation*}
    \left\{
    \begin{aligned}
     d\Phi(t)=&\big[A(\alpha_{t})+B(\alpha_{t})\widehat{\Theta}(\alpha_{t})\big]\Phi(t)dt
     +\big[C(\alpha_{t})+D(\alpha_{t})\widehat{\Theta}(\alpha_{t})\big]\Phi(t)dW(t)
     \\
      &+\sum_{j=1}^{L}\big[E_{j}(\alpha_{t-})+F_{j}(\alpha_{t-})\widehat{\Theta}(\alpha_{t-})\big]\Phi(t-)d\widetilde{N}_{j}(t),  \quad t\geq 0,\\
     \Phi(0)=&I,\quad \alpha_{0}=i.      
    \end{aligned}
    \right.
\end{equation*}
Then we have
$$
 V(x,i)=J(x,i;u(\cdot;x,i,\widehat{\mathbf{\Theta}}))=\mathbb{E}\int_{0}^{\infty}\big<\widehat{Q}(\alpha_{t})X(t),X(t)\big>dt=\big<P(i)x,x\big>,\quad \forall(x,i)\in\mathbb{R}^{n}\times \mathcal{S},
$$
with
$$
\widehat{Q}(i)=Q(i)+S(i)^{\top}\widehat{\Theta}(i)+\widehat{\Theta}(i)^{\top}S(i)+\widehat{\Theta}(i)^{\top}R(i)\widehat{\Theta}(i),
$$
and
\begin{equation}\label{eq-P}
P(i)=\mathbb{E}\Big[\int_{0}^{\infty}\Phi(t)^{\top}\widehat{Q}(\alpha_{t})\Phi(t)dt\big|\alpha(0)=i\Big],\qquad i\in\mathcal{S}.
\end{equation}
In the following, we shall prove that the $\mathbf{P}=\left[P(1),P(2),\cdots,P(L)\right]$ defined above is a static stabilizing solution of CAREs \eqref{CAREs-SLQ}.

Applying the dynamic programming principle to Problem (M-SLQ)$_{\infty}$, one has %and using It\^o's formula to $\left<P(\alpha_{t})X(t),X(t)\right>$, one has
\begin{align*}
\big<P(i)x,x\big>
&\leq \mathbb{E}\Big\{
    \int_{0}^{t}
    \left<
    \left(
    \begin{matrix}
    Q(\alpha_{s})&S(\alpha_{s})^{\top}\\
    S(\alpha_{s})&R(\alpha_{s})
    \end{matrix}
    \right)
    \left(
    \begin{matrix}
    X(s)\\
    u(s)
    \end{matrix}
    \right),
    \left(
    \begin{matrix}
    X(s)\\
    u(s)
    \end{matrix}
    \right)
    \right>ds+\big<P(\alpha_{t})X(t),X(t)\big>\Big\}\\
&= \mathbb{E}\int_{0}^{t}
    \left<
    \left(
    \begin{matrix}
    \mathcal{M}(P,\alpha_{s})&\mathcal{L}(P,\alpha_{s})\\
    \mathcal{L}(P,\alpha_{s})^{\top}&\mathcal{N}(P,\alpha_{s})
    \end{matrix}
     \right) 
    \left( 
    \begin{matrix}
    X(s)\\
    u(s)
    \end{matrix}
     \right) ,
     \left( 
    \begin{matrix}
    X(s)\\
    u(s)
    \end{matrix}
    \right)
    \right>ds +\big<P(i)x,x\big>, %\quad \forall (x,i)\in\mathbb{R}^{n}\times\mathcal{S} ,\quad \forall u\in\mathcal{U}_{LQ}^{0}(x,i),
\end{align*}
which implies that
$$
\mathbb{E}\int_{0}^{t}
    \left<
    \left(
    \begin{matrix}
    \mathcal{M}(P,\alpha_{s})&\mathcal{L}(P,\alpha_{s})\\
    \mathcal{L}(P,\alpha_{s})^{\top}&\mathcal{N}(P,\alpha_{s})
    \end{matrix}
     \right) 
    \left( 
    \begin{matrix}
    X(s)\\
    u(s)
    \end{matrix}
     \right) ,
     \left( 
    \begin{matrix}
    X(s)\\
    u(s)
    \end{matrix}
    \right)
    \right>ds\geq 0.
$$    
Dividing both sides by $t$ and letting $t\downarrow 0$, we  can further obtain 
$$
\left[
\begin{array}{cc}
  \mathcal{M}(P,i)  &  \mathcal{L}(P,i)  \\
  \mathcal{L}(P,i)^{\top}  & \mathcal{N}(P,i)
\end{array}
\right]\geq 0,\quad\forall i\in\mathcal{S}.
$$
Consequently, by Lemma \ref{lem-Schur}, the above equation further implies
\begin{equation}\label{SLQ_0-closed-p2}
 \mathcal{M}(P,i)-\mathcal{L}(P,i) \mathcal{N}(P,i)^{\dag} \mathcal{L}(P,i)^{\top} \geq 0, \quad\forall i\in\mathcal{S},
\end{equation}
and
\begin{equation}\label{SLQ_0-closed-p3}
  \mathcal{N}(P,i)\geq 0, \quad \mathcal{L}(P,i)\left[I-\mathcal{N}(P,i)\mathcal{N}(P,i)^{\dag}\right]=0,\quad\forall i\in\mathcal{S}.
\end{equation}
% Next, we will prove that $\mathbf{P}$ defined in \eqref{eq-P} is the solution of \eqref{CAREs-SLQ}. 

On the other hand, by Remark \ref{rmk-useful} and equation \eqref{SLQ_0-closed-p3}, we obtain
\begin{align*}
&\quad  V(x,i)=J(x,i;u(\cdot;x,i,\widehat{\mathbf{\Theta}}))  \\
&=\big<P(i)x,x\big>+\mathbb{E}
    \int_{0}^{\infty}
    \Big<
    \big[\mathcal{M}(P,\alpha_{t})+\mathcal{L}(P,\alpha_{t})\widehat{\Theta}(\alpha_{t})
    +\widehat{\Theta}(\alpha_{t})^{\top}\mathcal{L}(P,\alpha_{t})^{\top}\\
    &\qquad+\widehat{\Theta}(\alpha_{t})^{\top}\mathcal{N}(P,\alpha_{t})\widehat{\Theta}(\alpha_{t})\big]X(t), X(t)\Big>dt\\
&=\big<P(i)x,x\big>+\mathbb{E} \int_{0}^{\infty}
\Big<\Big[\mathcal{M}(P,\alpha_{t})-\mathcal{L}(P,\alpha_{t})
\mathcal{N}(P,\alpha_{t})^{\dag}\mathcal{L}(P,\alpha_{t})^{\top}\Big]X(t),X(t)\Big>dt \\
&\quad+\mathbb{E} \int_{0}^{\infty}\big<\mathcal{N}(P,\alpha_{t})\big[\widehat{\Theta}(\alpha_{t})
+\mathcal{N}(P,\alpha_{t})^{\dag}\mathcal{L}(P,\alpha_{t})^{\top}\big]X(t),\big[\widehat{\Theta}(\alpha_{t})
+\mathcal{N}(P,\alpha_{t})^{\dag}\mathcal{L}(P,\alpha_{t})^{\top}\big]X(t)\big>dt.
\end{align*}
Combining with \eqref{SLQ_0-closed-p2}-\eqref{SLQ_0-closed-p3}, one has
$$\mathcal{M}(P,\alpha_{t})-\mathcal{L}(P,\alpha_{t}) \mathcal{N}(P,\alpha_{t})^{\dag} \mathcal{L}(P,\alpha_{t})^{\top}=0, \quad a.e.\quad a.s..$$
Hence, the  $\mathbf{P}=\left[P(1),P(2),...,P(L)\right]$ defined in \eqref{eq-P} solves the CAREs \eqref{CAREs-SLQ},
and satisfies
$$\mathcal{N}(P,\alpha_{t})^{\frac{1}{2}}\big[\widehat{\Theta}(\alpha_{t})
+\mathcal{N}(P,\alpha_{t})^{\dag}\mathcal{L}(P,\alpha_{t})^{\top}\big]=0,\quad a.e.\quad a.s.,$$
which, together with \eqref{SLQ_0-closed-p3}, derives 
$$\mathcal{N}(P,i)\widehat{\Theta}(i)+\mathcal{L}(P,i)^{\top}=0, \quad\forall i\in\mathcal{S}.$$
Since $\mathcal{N}(P,i)\mathcal{N}(P,i)^{\dag}$ is an orthogonal projection, we have
$$\widehat{\Theta}(i)=-\mathcal{N}(P,i)^{\dag}\mathcal{L}(P,i)^{\top}+\big[I-\mathcal{N}(P,i)^{\dag}\mathcal{N}(P,i)\big]\Pi(i),\quad \forall i\in\mathcal{S},$$
for some $\mathbf{\Pi}=\left[\Pi(1),\Pi(2),...,\Pi(L)\right]\in\mathcal{D}\left(\mathbb{R}^{m\times n}\right)$, which further implies that $\mathbf{P}=\left[P(1),P(2),...,P(L)\right]$ is a  static stabilizing solution of CAREs \eqref{CAREs-SLQ}.

\textbf{Sufficiency.}  For any $u(\cdot)\in\mathcal{U}_{ad}(x,i)$, let $X(\cdot;x,i,u(\cdot))$ be the corresponding controlled state process, $\widehat{\mathbf{\Theta}}$ be given by \eqref{eq-Theta} and $\mathbf{P}\in\mathcal{D}\left(\mathbb{S}^{n}\right)$ be a static stabilizing solution to \eqref{CAREs-SLQ}. Additionally, we define
$$
\nu(t)=u(t)-\widehat{\Theta}(\alpha_{t-})X(t-;x,i,u(\cdot)),\qquad t\geq 0.
$$
Then by the uniqueness solvability of SDE \eqref{intro-state}, the solution $X(\cdot;x,i,u(\cdot))$ also solves the following SDE:
\begin{equation}\label{state-SLQ-closed-3}
 \left\{
 \begin{aligned}
   dX(t)&=\left\{\left[A(\alpha_{t})+B(\alpha_{t})\widehat{\Theta}(\alpha_{t})\right]X(t)+B(\alpha_{t})\nu(t)\right\}dt\\
   &\quad +\left\{\left[C(\alpha_{t})+D(\alpha_{t})\widehat{\Theta}(\alpha_{t})\right]X(t)+D(\alpha_{t})\nu(t)\right\}dW(t)\\
   &\quad+\sum_{j=1}^{L}\left\{\left[E_{j}(\alpha_{t-})+F_{j}(\alpha_{t-})\widehat{\Theta}(\alpha_{t-})\right]X(t-)+F_{j}(\alpha_{t-})\nu(t)\right\}d\widetilde{N}_{j}(t),\qquad t\geq0,\\
   X(0)&=x,\quad \alpha(0)=i.
   \end{aligned}
 \right.
 \end{equation}
Consequently, we have
\begin{equation}\label{eq-cost}
\begin{aligned}
J(x,i;u(\cdot))&=\mathbb{E}\int_{0}^{\infty}
    \left<
    \left(
    \begin{matrix}
    Q(\alpha_{t})&S(\alpha_{t})^{\top}\\
    S(\alpha_{t})&R(\alpha_{t})
    \end{matrix}
     \right) 
    \left( 
    \begin{matrix}
    X(t)\\
    \widehat{\Theta}(\alpha_{t})X(t)+\nu(t)
    \end{matrix}
     \right) ,
     \left( 
    \begin{matrix}
    X(t)\\
    \widehat{\Theta}(\alpha_{t})X(t)+\nu(t)
    \end{matrix}
    \right)
    \right>dt\\
    &=\mathbb{E}\int_{0}^{\infty}\Big[\big<\widehat{Q}(\alpha_{t})X(t),X(t)\big>+\big<R(\alpha_{t})\nu(t),\nu(t)\big>
    +2\big<\big(S(\alpha_{t})+R(\alpha_{t})\widehat{\Theta}(\alpha_{t})\big)X(t),\nu(t)\big>\Big]dt.
\end{aligned}
\end{equation}

On the other hand, applying It\^o's rule to $\big<P(\alpha_{t})X(t),X(t)\big>$, we have
\begin{align*}
0&=\big<P(i)x,x\big>+\mathbb{E}\int_{0}^{\infty}\Big[\big<\big(\mathcal{M}(P,\alpha_{t})-Q(\alpha_{t})\big)X(t),X(t)\big>
+\big<\big(\mathcal{N}(P,\alpha_{t})-R(\alpha_{t})\big)\nu(t),\nu(t)\big>\\
   &\quad +2\big<\big(\mathcal{L}(P,\alpha_{t})-S(\alpha_{t})^{\top}\big)\widehat{\Theta}(\alpha_{t})X(t),X(t)\big>
   +2\big<\big(\mathcal{L}(P,\alpha_{t})-S(\alpha_{t})^{\top}\big)\nu(t),X(t)\big>\\
   &\quad+2\big<\big(\mathcal{N}(P,\alpha_{t})-R(\alpha_{t})\big)\widehat{\Theta}(\alpha_{t})X(t),\nu(t)\big>
   +\big<\big(\mathcal{N}(P,\alpha_{t})-R(\alpha_{t})\big)\widehat{\Theta}(\alpha_{t})X(t),\widehat{\Theta}(\alpha_{t})X(t)\big>\Big]dt.
\end{align*}
Substituting the above equation into \eqref{eq-cost} yields
\begin{equation}\label{eq-cost-2}
\begin{aligned}
&\quad J(x,i;u(\cdot))\\
&=\mathbb{E}\int_{0}^{\infty}\Big[\big<\big(\mathcal{M}(P,\alpha_{t})+\mathcal{L}(P,\alpha_{t})\widehat{\Theta}(\alpha_{t})
+\widehat{\Theta}(\alpha_{t})^{\top}\mathcal{L}(P,\alpha_{t})^{\top}+\widehat{\Theta}(\alpha_{t})^{\top}\mathcal{N}(P,\alpha_{t})\widehat{\Theta}(\alpha_{t})\big)X(t),X(t)\big>\\
&\quad+2\big<\big(\mathcal{N}(P,\alpha_{t})\widehat{\Theta}(\alpha_{t})+\mathcal{L}(P,\alpha_{t})^{\top}\big)X(t),\nu(t)\big>
    +\big<\mathcal{N}(P,\alpha_{t})\nu(t),\nu(t)\big>\Big]dt+\big<P(i)x,x\big>.
\end{aligned}
\end{equation}

Note that for any $i\in\mathcal{S}$,
\begin{align*}
  &\mathcal{M}(P,i)+\mathcal{L}(P,i)\widehat{\Theta}(i)+\widehat{\Theta}(i)^{\top}\mathcal{L}(P,i)^{\top}
  +\widehat{\Theta}(i)^{\top}\mathcal{N}(P,i)\widehat{\Theta}(i)=0,\\
  &\mathcal{N}(P,i)\widehat{\Theta}(i)+\mathcal{L}(P,i)^{\top}=0.
\end{align*}
Hence, the equation \eqref{eq-cost-2} implies that 
\begin{equation}\label{eq-cost-3}
\begin{aligned}
 J(x,i;u(\cdot))=\big<P(i)x,x\big>+
\mathbb{E}\int_{0}^{\infty}\big<\mathcal{N}(P,\alpha_{t})\nu(t),\nu(t)\big>dt,\quad \forall (x,i)\in\mathbb{R}^{n}\times\mathcal{S}.
\end{aligned}
\end{equation}

On the other hand, let $u(\cdot;x,i,\widehat{\mathbf{\Theta}})$ be the outcome of closed-loop optimal strategy $\widehat{\mathbf{\Theta}}$. Then one can similarly derive that
\begin{equation}\label{eq-cost-4}
\begin{aligned}
 J(x,i;u(\cdot;x,i,\widehat{\mathbf{\Theta}}))=\big<P(i)x,x\big>,\quad \forall (x,i)\in\mathbb{R}^{n}\times\mathcal{S}.
\end{aligned}
\end{equation}
Combining with the fact $\mathcal{N}(P,i)\geq 0$ for every $i\in\mathcal{S}$, we can derive that the $\widehat{\mathbf{\Theta}}$ defined in \eqref{eq-Theta} is a closed-loop optimal strategy by definition. Consequently,  the value function is given by 
$$
V(x,i)=J(x,i;u(\cdot;x,i,\widehat{\mathbf{\Theta}}))=\big<P(i)x,x\big>.
$$ 
This completes the proof.
\end{proof}

\begin{remark}\rm
 Since the value function of Problem (M-SLQ) is unique. Thus, from \eqref{eq-value-function}, one can obtain that the CAREs \eqref{CAREs-SLQ} admits at most one static stabilizing solution. 
\end{remark}

% ======================================================================
%  Section 5: Applications -- tracking a wealth level in a
%  regime-switching market with price jumps at the switching times.
%  Notation follows the main text of M-SLQ-infty.tex.
%  Citation keys refer to refs-section5.bib.
% ======================================================================
\section{Application to lifetime wealth tracking problem}\label{sec-4}
Based on the derived results, we shall investigate a class of lifetime wealth tracking problems in this section, which has already attracted extensive research attention (see, for example, Yao et al. \cite{YaoZhangZhou2006} and Hu et al. \cite{HuShiXu2022cocv}). In contrast to the existing literature, we assume that the prices of risky asset may jump at the moments of market regime switches. This jump magnitudes can be interpreted as the additional gains or losses of asset value incurred during the regime-switching process. Indeed, in the field of mathematical finance, a substantial body of literature has documented that asset prices exhibit jump-like fluctuations in response to changes in market regimes (see, for example, Shi and Xu \cite{shi2025optimal},  and Zhang et al. \cite{Zhang-Siu-Meng-2010}).

Specifically, we use a Markov chain $\alpha(\cdot)$ to characterize the market regimes, and assume that there are a risk-free asset and a risky asset in the market. The price of the risk-free asset satisfies the following equation
\[
dS_0(t)=r(\alpha_t)S_0(t)\,dt,\qquad S_0(0)=s_0>0,
\]
where $r(i)$ represents the interest rate while the market regime is in $i$. 
The risky asset is governed by
\[
\left\{
\begin{aligned}
dS(t)&=S(t-)\Big[\mu(\alpha_t)\,dt+\sigma(\alpha_t)\,dW(t)
+\sum_{j=1}^{L}\gamma_j(\alpha_{t-})\,d\widetilde{N}_j(t)\Big],\\
S(0)&=s>0,
\end{aligned}
\right.
\]
where $\mu(i)$ is the appreciation rate, $\sigma(i)$ is the volatility, and
$\gamma_j(i)>-1$ is the relative price jump when the regime switches from
$i$ to $j$. Therefore, when the market regime switches from another state to state $j$ at time $t$, the price of the risky asset undergoes a jump of magnitude $S(t-)\gamma_{j}(\alpha_{t-})$. This jump magnitude depends not only on the price of the risky asset prior to the regime switch, but also on the market states before and after the switch. For example, suppose the market has only two regimes, regime $1$ represents a bull market and regime $2$ a bear market. Then $\gamma_2(1)<0$ models a crash at the
onset of the bear regime, while $\gamma_1(2)\geq0$ allows for a rebound at
recovery. If $\gamma_j(i)\equiv0$, the model reduces to the standard
regime-switching diffusion market without price jumps at switching times.

%\subsection{The wealth process and the tracking problem}

Let $u(t)$ be the dollar amount invested in the risky asset at time $t$, with the remaining wealth invested in the risk-free asset. Then the corresponding self-financing wealth process satisfies
\begin{equation}\label{app-wealth}
\left\{
\begin{aligned}
dX(t)&=\big[r(\alpha_{t})X(t)+\big(\mu(\alpha_{t})-r(\alpha_{t})\big)u(t)\big]dt
+\sigma(\alpha_{t})u(t)dW(t)
+\sum_{j=1}^{L}\gamma_{j}(\alpha_{t-})u(t)d\widetilde{N}_{j}(t),\\
X(0)&=x,\quad \alpha_{0}=i.
\end{aligned}
\right.
\end{equation}
Following the benchmark-tracking formulation introduced in
\cite{YaoZhangZhou2006,HuShiXu2022cocv}, we assume that the investor aims to keep the wealth close to the target
$$
G(t)\triangleq d\,e^{\int_{0}^{t}r(\alpha_{s})ds},\qquad d\in\mathbb{R},
$$
which can be viewed as the value at time $t$ of the initial amount $d$ invested in the risk-free asset. Then the lifetime wealth tracking problem can be summarized as

\textbf{Problem (LWT).} For any given $(x,d,i)\in \mathbb{R}\times\mathbb{R}\times\mathcal{S}$, find a optimal investment control $u^{*}(\cdot)$ to minimize the following performance functional
\begin{equation}\label{app-objective}
 J(x,d,i;u(\cdot))\triangleq
\mathbb{E}\int_{0}^{\infty}e^{-2\int_{0}^{t}\rho(\alpha_{s})ds}
\Big[\big(X(t)-G(t)\big)^{2}+\lambda u(t)^{2}\Big]dt,
\end{equation}
where $\lambda>0$ is the penalty coefficient for the risky position, and $\rho(i)$ denotes the discount rate under regime $i$.
%We assume that
%\begin{equation}\label{app-wellposed}
%\rho(i)-r(i)\geq c>0,\qquad i\in\mathcal{S},
%\end{equation}
%for some constant $c$, which ensures well-posedness on the infinite horizon.
%\subsection{Reduction to Problem (M-SLQ)$_{\infty}$}

Clearly, the problem \eqref{app-objective}  is a discounted optimization problem. Using the linear substitution method introduced in Wu et al. \cite{WuLiZhang2025}, we can further convert it into a class of undiscounted optimization problems. To this end, we define
\begin{equation}\label{app-transform}
\widetilde{X}(t)\triangleq e^{-\int_{0}^{t}\rho(\alpha_{s})ds}\big(X(t)-G(t)\big),
\qquad
\widetilde{u}(t)\triangleq e^{-\int_{0}^{t}\rho(\alpha_{s})ds}u(t).
\end{equation}
%Since the discount factor and $G(\cdot)$ are continuous processes of finite
%variation, $Y(\cdot)$ jumps
%only through $X(\cdot)$: at a switching time from $i$ to $j$,
%\(
%\Delta Y(t)=\gamma_j(i)\widetilde u(t).
%\)
Then applying It\^o's formula, we obtain
\begin{equation}\label{app-Y-raw}
\left\{
\begin{aligned}
d\widetilde{X}(t)&=\Big[\big(r(\alpha_{t})-\rho(\alpha_{t})\big)\widetilde{X}(t)
+\big(\mu(\alpha_{t})-r(\alpha_{t})\big)\widetilde{u}(t)\Big]dt
+\sigma(\alpha_{t})\widetilde{u}(t)dW(t)
+\sum_{j=1}^{L}\gamma_{j}(\alpha_{t-})\widetilde{u}(t)d\widetilde{N}_{j}(t),\\
\widetilde{X}(0)&=x-d.
\end{aligned}
\right.
\end{equation}
Additionally, the performance functional \eqref{app-objective} can be rewritten as 
\begin{equation}\label{app-cost}
J(x,d,i;u(\cdot))=\widetilde{J}(x-d,i;\widetilde{u}(\cdot))\triangleq\mathbb{E}\int_{0}^{\infty}\big[\widetilde{X}(t)^{2}+\lambda \widetilde{u}(t)^{2}\big]dt .
\end{equation}
Consequently, the Problem (LWT)  is transformed into a class of infinite-horizon SLQ control problems for Markov regime-switching jump-diffusion systems with the following coefficients:
\begin{equation}\label{AP-coefficients}
\left\{
\begin{aligned}
 & A(i)=r(i)-\rho(i),\quad C(i)=E_{j}(i)=0,\quad B(i)=\mu(i)-r(i),\quad D(i)=\sigma(i),\quad F_{j}(i)=\gamma_{j}(i),\\
 &Q(i)=1,\quad S(i)=0,\quad R(i)=\lambda,\qquad \forall\, i,j\in\mathcal{S}.
\end{aligned}
\right.
\end{equation}
In the following, we denote the SLQ control problem with state equation \eqref{app-Y-raw} and cost functional \eqref{app-cost} as Problem (M-SLQ)$_{\infty}^{'}$, and introduce the following assumption in this section:\\
\textbf{(H1)$'$} $\rho(i)>r(i)$ for any $i\in\mathcal{S}$.\\
Then, by Remark \ref{rmk-stable}, one can easily verify that the system \eqref{app-Y-raw} is $L^{2}$-stable based on assumption (H1)$'$, and  the closed-loop strategy set for Problem (M-SLQ)$_{\infty}^{'}$ is non-empty. Additionally, the operators introduced in \eqref{MLN} are given by
\begin{equation}\label{app-MLN}
\begin{aligned}
\mathcal{M}(\mathbf P,i)&=2\big(r(i)-\rho(i)\big)P(i)+1
+\sum_{j\neq i}\pi_{ij}\big(P(j)-P(i)\big),\\
\mathcal{L}(\mathbf P,i)&=\big(\mu(i)-r(i)\big)P(i)
+\sum_{j\neq i}\pi_{ij}\gamma_{j}(i)\big(P(j)-P(i)\big),\\
\mathcal{N}(\mathbf P,i)&=\lambda+\sigma(i)^{2}P(i)
+\sum_{j\neq i}\pi_{ij}\gamma_{j}(i)^{2}P(j).
\end{aligned}
\end{equation}
 Consequently, by Theorem \ref{thm-SLQ_0-closed}, we have the following result directly. 

\begin{proposition}
If the following CAREs 
\begin{equation}\label{app-CARE}
\left\{
\begin{aligned}
\mathcal{M}(\mathbf P,i)-\frac{\mathcal{L}(\mathbf P,i)^{2}}{\mathcal{N}(\mathbf P,i)}=0,\\
\mathcal{N}(\mathbf P,i)>0,\qquad i\in\mathcal{S},
\end{aligned}
\right.
\end{equation}
admits a solution $\mathbf{P}\in\mathcal{D}(\mathbb{S}^{n})$ such that $\widehat{\mathbf{\Theta}}=\left(\widehat{\Theta}(1),\widehat{\Theta}(2),\cdots,\widehat{\Theta}(L)\right)
\in\mathcal{H}_{\alpha}$ with
\begin{equation}\label{app-theta}
\widehat{\Theta}(i)=-\frac{\mathcal{L}(\mathbf P,i)}{\mathcal{N}(\mathbf P,i)},\quad i\in\mathcal{S}.
\end{equation}
Then the Problem (LWT) admits an optimal investment  control
\begin{equation}\label{app-optimal-control}
u^{*}(t)=\widehat{\Theta}(\alpha_{t-})(X^{*}(t-)-G(t-)),\qquad t\geq 0,
\end{equation}
where $X^{*}(\cdot)$ is the solution of the following SDE:
\begin{equation}\label{app-optimal-wealth}
  \left\{
\begin{aligned}
dX^{*}(t)&=\big[r(\alpha_{t})X^{*}(t)+\big(\mu(\alpha_{t})-r(\alpha_{t})\big)\widehat{\Theta}(\alpha_{t})(X^{*}(t)-G(t))\big]dt
+\sigma(\alpha_{t})\widehat{\Theta}(\alpha_{t})(X^{*}(t)-G(t))dW(t)\\
&\quad+\sum_{j=1}^{L}\gamma_{j}(\alpha_{t-})\widehat{\Theta}(\alpha_{t-})(X^{*}(t-)-G(t-))d\widetilde{N}_{j}(t),\\
X^{*}(0)&=x,\quad \alpha_{0}=i.
\end{aligned}
\right.
\end{equation}
In this case, the optimal value function is given by
\begin{equation}\label{app-value}
J(x,d,i;u^{*}(\cdot))=P(i)(x-d)^{2}.
\end{equation}
\end{proposition}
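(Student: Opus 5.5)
The plan is to reduce the statement to Theorem \ref{thm-SLQ_0-closed} applied to Problem (M-SLQ)$_{\infty}^{'}$, and then undo the transformation \eqref{app-transform}.

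First, I check that a solution of \eqref{app-CARE} is a static stabilizing solution of the general CAREs \eqref{CAREs-SLQ} with the coefficients \eqref{AP-coefficients}. Since $n=m=1$ and $\mathcal{N}(\mathbf P,i)>0$, we have $\mathcal{N}^{\dag}=1/\mathcal{N}$ and $I-\mathcal{N}\mathcal{N}^{\dag}=0$. So the range condition holds trivially, $\mathcal{N}\geq 0$ holds, and the first equation of \eqref{CAREs-SLQ} is exactly the first line of \eqref{app-CARE}. The feedback $\mathcal{K}(\Pi(i))$ then reduces to $\widehat{\Theta}(i)=-\mathcal{L}(\mathbf P,i)/\mathcal{N}(\mathbf P,i)$, independently of $\mathbf{\Pi}$. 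The hypothesis $\widehat{\mathbf{\Theta}}\in\mathcal{H}_{\alpha}$ (for the transformed system) is precisely the stabilizing requirement. Assumption (H1)$'$ together with Remark \ref{rmk-stable} guarantees (H1).

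Theorem \ref{thm-SLQ_0-closed} then gives two facts:
\begin{itemize}
\item $\widehat{\mathbf{\Theta}}$ is a closed-loop optimal strategy for Problem (M-SLQ)$_{\infty}^{'}$;
\item its value is $\widetilde V(y,i)=P(i)y^{2}$ with $y=x-d$.
\end{itemize}
The optimal transformed control is $\widetilde u^{*}(t)=\widehat{\Theta}(\alpha_{t-})\widetilde X^{*}(t-)$, where $\widetilde X^{*}$ solves the closed-loop version of \eqref{app-Y-raw}.

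Second, I transfer back. Set $X^{*}(t)=G(t)+e^{\int_0^t\rho(\alpha_s)ds}\widetilde X^{*}(t)$ and $u^{*}(t)=e^{\int_0^t\rho(\alpha_s)ds}\widetilde u^{*}(t)=\widehat{\Theta}(\alpha_{t-})(X^{*}(t-)-G(t-))$. The discount factor and $G$ are continuous and of finite variation, so applying It\^o's formula to this product shows that $X^{*}$ solves \eqref{app-wealth} with control $u^{*}$. That equation is \eqref{app-optimal-wealth}, and uniqueness of its solution identifies $X^{*}$. The map $u\mapsto\widetilde u$ in \eqref{app-transform} is a bijection between the admissible investment controls and $\mathcal{U}_{ad}(x-d,i)$ for the transformed problem, where admissibility for Problem (LWT) is naturally taken to mean exactly this. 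By \eqref{app-cost}, $J(x,d,i;u)=\widetilde J(x-d,i;\widetilde u)$. Hence optimality of $\widetilde u^{*}$ transfers to $u^{*}$, and $J(x,d,i;u^{*})=P(i)(x-d)^{2}$.

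The main obstacle is this bijection of admissible classes. One must specify that an investment control is admissible iff its discounted version lies in $L^{2}_{\mathcal P}$ and yields an $L^{2}$ discounted tracking error. Predictability is preserved because the factor $e^{-\int_0^t\rho}$ is continuous and adapted. With that convention, everything else is a direct application of the earlier theorem.
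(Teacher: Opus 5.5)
Your proposal is correct and follows the paper's own argument. You check that, because $n=m=1$ and $\mathcal{N}(\mathbf P,i)>0$, a solution of \eqref{app-CARE} with $\widehat{\mathbf{\Theta}}\in\mathcal{H}_{\alpha}$ is a static stabilizing solution of \eqref{CAREs-SLQ}; you then apply Theorem \ref{thm-SLQ_0-closed} to Problem (M-SLQ)$_{\infty}^{'}$ and undo the substitution \eqref{app-transform} via \eqref{app-cost}, while making explicit two points the paper leaves implicit: the It\^o verification that $X^{*}$ solves \eqref{app-optimal-wealth}, and the convention that admissibility for Problem (LWT) is defined so that $u\mapsto\widetilde{u}$ is a bijection onto $\mathcal{U}_{ad}(x-d,i)$.
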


\begin{proof}
Obviously, if the CAREs \eqref{app-CARE} admits a solution $\mathbf{P}$ such that the corresponding $\widehat{\mathbf{\Theta}}\in\mathcal{H}_{\alpha}$, then it must be the unique static stabilizing solution. Consequently, by Theorem \ref{thm-SLQ_0-closed}, the Problem (M-SLQ)$_{\infty}^{'}$ admits the optimal control
$$
\widetilde{u}(t)=\widehat{\Theta}(\alpha_{t-})\widetilde{X}^{*}(t-),\quad t\geq 0,
$$
with $\widetilde{X}^{*}(\cdot)$ being the solution of SDE:
\begin{equation}\label{app-Y-raw-star}
\left\{
\begin{aligned}
d\widetilde{X}^{*}(t)&=\Big[\big(r(\alpha_{t})-\rho(\alpha_{t})\big)\widetilde{X}^{*}(t)
+\big(\mu(\alpha_{t})-r(\alpha_{t})\big)\widehat{\Theta}(\alpha_{t})\widetilde{X}^{*}(t)\Big]dt
+\sigma(\alpha_{t})\widehat{\Theta}(\alpha_{t})\widetilde{X}^{*}(t)dW(t)\\
&\quad+\sum_{j=1}^{L}\gamma_{j}(\alpha_{t-})\widehat{\Theta}(\alpha_{t-})\widetilde{X}^{*}(t-)d\widetilde{N}_{j}(t),\\
\widetilde{X}^{*}(0)&=x-d.
\end{aligned}
\right.
\end{equation}
In this case, the value function of Problem (M-SLQ)$_{\infty}^{'}$ is given by 
$$\widetilde{J}(x-d,i;\widetilde{u}^{*}(\cdot))=P(i)(x-d)^{2}.$$
Then the desired results follows from the relations \eqref{app-transform} and \eqref{app-cost}. This completes the proof.
\end{proof}

\begin{remark}\rm
If $\gamma_j(i)\equiv0$ and $\lambda=0$, then \eqref{app-CARE} becomes
$$
2\big(r(i)-\rho(i)\big)P(i)+1+\sum_{j\neq i}\pi_{ij}\big(P(j)-P(i)\big)
-\frac{\big(\mu(i)-r(i)\big)^2P(i)^2}{\sigma(i)^2P(i)}=0,
\qquad i\in\mathcal S.
$$
We point out that the above linear system has been solved in Remark 6.5 of Hu et al. \cite{HuShiXu2022cocv} for the unconstrained tracking problem without price jumps, which can be considered as a special case of Problem (LWT). %Thus the additional terms in \eqref{app-MLN} are exactly those involving $\gamma_{j}(i)$.
\end{remark}

\section{Numerical analysis}\label{sec-5}
In this section, we consider a two-regime market, where regime $1$ represents a bull market
and regime $2$ represents a bear market.  The generator of the Markov chain is given by
$$
\mathbf{\Pi}=\begin{pmatrix}
               -0.2 & 0.2 \\
               0.1 & -0.1 
             \end{pmatrix},
$$
and the model parameters are given in the following table:
\begin{table}[H]
\centering
%\caption{Market parameters in the two-regime example.}
\label{tab-app-parameters}
\begin{tabular}{@{}c c c c c c c @{}} 
\toprule
Regime $i$ & $r(i)$ & $\mu(i)$ & $\sigma(i)$ & $\rho(i)$
& $\gamma_1(i)$ & $\gamma_2(i)$  \\
\midrule
$1$ & $0.03$ & $0.12$ & $0.12$ & $0.05$ & $0$ & $-0.18$ \\
$2$ & $0.01$ & $-0.04$ & $0.20$ & $0.03$ & $0.05$ & $0$ \\
\bottomrule
\end{tabular}
\end{table}

\noindent Additionally, we take penalty weight $\lambda=0.02$. % A switch from the bull regime to the bear regime is
%accompanied by an $18\%$ price drop, while a switch back to the bull regime produces a $5\%$ rebound. 
%The jump-adjusted excess returns are
%\(
%\widetilde b(1)=0.08-0.03-0.18=-0.13,
%\widetilde b(2)=0.06-0.01+2\times0.05=0.15.
%\)
%Hence crash risk makes the adjusted excess return negative in the bull
%regime.
Based on the numerical values provided above, CAREs \eqref{app-CARE} is transformed into:
\begin{equation}\label{exam-CAREs}
\left\{
\begin{aligned}
&-0.24P(1)+0.2P(2)+1-\frac{\left[0.09P(1)-0.2\times 0.18\times(P(2)-P(1))\right]^2}{0.02+0.12^{2}\times P(1)+0.2\times 0.18^2\times P(2)}=0,\\
&0.1P(1)-0.14P(2)+1-\frac{\left[-0.05P(2)+0.1\times 0.05\times(P(1)-P(2))\right]^2}{0.02+0.2^{2}\times P(2)+0.1\times 0.05^2\times P(1)}=0,\\
&0.02+0.12^{2}\times P(1)+0.2\times 0.18^2\times P(2)>0,\\
&0.02+0.2^{2}\times P(2)+0.1\times 0.05^2\times P(1)>0.
\end{aligned}
\right.
\end{equation}
%Based on the above numerical example, we can obtain the numerical solution of CAREs \eqref{app-CARE}
By applying Newton's iterative algorithm, the numerical solution to the above equation is obtained as follows:
\[
P(1)=5.5867254,\qquad P(2)=7.72632362,
\]
We point out that the residual of above numerical solution is below $10^{-9}$.
% Clearly, the above numerical solution satisfies
%\[
%\mathcal N(\mathbf P,1)={0.1505}>0,\qquad   
%\mathcal N(\mathbf P,2)={0.3304}>0.
%\]
Additionally, substituting the numerical solution into equation \eqref{app-theta} yields
\[
\widehat\Theta(1)={-2.8288},\qquad     
\widehat\Theta(2)={1.2014}.
\]
One can easily verify that
\begin{equation}\label{app-stable}
\left\{
\begin{aligned}
&2[r(1)-\rho(1)+(\mu(1)-r(1))\widehat\Theta(1)]+\sigma(1)^{2}\widehat\Theta(1)^{2}
+\pi_{12}\gamma_{2}(1)^{2}\widehat\Theta(1)^{2}=-0.3821<0,\\
&2[r(2)-\rho(2)+(\mu(2)-r(2))\widehat\Theta(2)]+\sigma(2)^{2}\widehat\Theta(2)^{2}
+\pi_{21}\gamma_{1}(2)^{2}\widehat\Theta(2)^{2}=-0.1020<0.
\end{aligned}
\right.
\end{equation}
Hence, by Remark \ref{rmk-stable}, we can claim that $\widehat{\mathbf{\Theta}}=(\widehat\Theta(1),\widehat\Theta(2))\in\mathcal{H}_{\alpha}.$
%To check stabilizability, we apply Proposition \ref{prop-L2} to the
%closed-loop system. Let 
%To verify the stabilizing property, define the closed-loop coefficients
%\[
%A^{\widehat\Theta}(i)=A(i)+B(i)\widehat\Theta(i),\qquad
%C^{\widehat\Theta}(i)=D(i)\widehat\Theta(i),\qquad
%E_j^{\widehat\Theta}(i)=F_j(i)\widehat\Theta(i).
%\]
%Taking $p(1)=p(2)=1$, the Lyapunov inequalities in Proposition
%\ref{prop-L2} reduce to
%\[
%2A^{\widehat\Theta}(1)+\big(C^{\widehat\Theta}(1)\big)^2
%+\pi_{12}\big(E_2^{\widehat\Theta}(1)\big)^2
%=-0.3905<0,
%\]
%and
%\[
%2A^{\widehat\Theta}(2)+\big(C^{\widehat\Theta}(2)\big)^2
%+\pi_{21}\big(E_1^{\widehat\Theta}(2)\big)^2
%=-0.5326<0.
%\]
%Hence $\widehat{\mathbf{\Theta}}\in\mathcal{H}_{\alpha}$, and $\mathbf P$
%is a static stabilizing solution of \eqref{CAREs-SLQ}. 
Consequently, the
optimal portfolio is given by \eqref{app-optimal-control}.%, with optimal value

\begin{remark}\rm
  To isolate the effect of switching-time price jumps, we also compute a jump-free benchmark by setting $\gamma_j(i)\equiv0$, while keeping the same generator and market parameters. By applying Newton's iterative algorithm again, we obtain
\[
%(P(1),P(2))=(3.6707,\,6.8949),\qquad\text{and}\qquad
(\widehat\Theta(1),\widehat\Theta(2))=(-4.5343,\,1.1655).
\]
Compared with the model that includes switching-time jump, the feedback strategy in the bull regime becomes substantially more aggressive (in particular, $|\widehat\Theta(1)|$ increases from $2.8288$ to $4.5343$). This indicates that accounting for crash risk at regime switches leads to a more conservative optimal strategy in the bull regime. Intuitively, the investor reduces the risky position to limit the potential loss caused by the $18\%$ price drop ($\gamma_2(1)=-0.18$) upon a bull-to-bear regime
switch.
\end{remark}

In the following, we present the dynamic behavior of the optimal state and control, and further analyze the sensitivity of the stabilizing solution to variations in the control weight $\lambda$, with all other parameters held fixed.

\begin{figure}[H]
\centering
\includegraphics[width=0.82\textwidth]{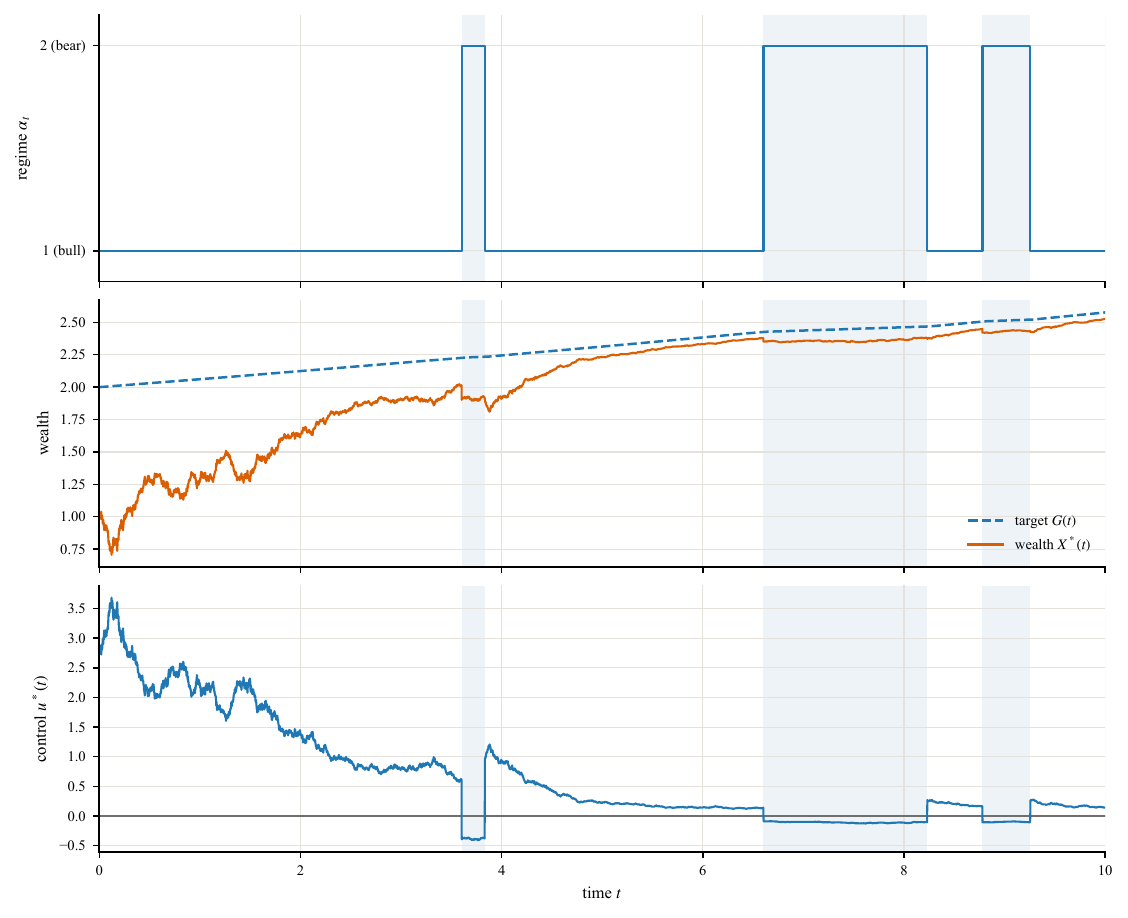}
\caption{Sample paths of the optimal wealth $X^*(t)$, target $G(t)$, and investment control $u^*(t)$.}
\label{fig-tracking}
\end{figure}

%Several features of the sample paths are worth highlighting. Recall from \eqref{app-optimal-control} that the optimal strategy is a linear feedback policy on the tracking error
%$X^{*}(t-)-G(t-)$:
%\[
%u^{*}(t)=\widehat\Theta(\alpha_{t-})(X^{*}(t-)-G(t-)),
%\]
%where the feedback coefficient $\widehat\Theta$ switches with
%the market regime.
%\begin{itemize}
%\item The signs $\widehat\Theta(1)=-2.8288<0$ and $\widehat\Theta(2)=1.2014>0$ imply regime-dependent positions.
%If $X^{*}(t)<G(t)$, the investor is long in the bull regime and short in the bear regime, the latter being consistent with
%the negative excess return $\mu(2)-r(2)<0$. When $X^{*}(t)>G(t)$, the positions reverse, so the strategy is
%mean-reverting toward the benchmark.
%\item Since $u^{*}(t)$ is proportional to $X^{*}(t)-G(t)$, the position size decreases as the tracking gap narrows. 
%This explains why the control path converges to zero once the wealth process approaches the benchmark.
%\end{itemize}
\begin{figure}[H]
\centering
\subfloat[Feedback coefficients.\label{fig-lambda-feedback-gain}]{
\includegraphics[width=0.48\textwidth]{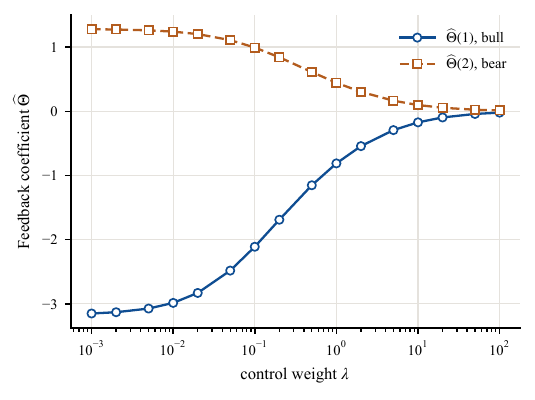}}
\hfill
\subfloat[Riccati solutions.\label{fig-lambda-value-coefficients}]{
\includegraphics[width=0.48\textwidth]{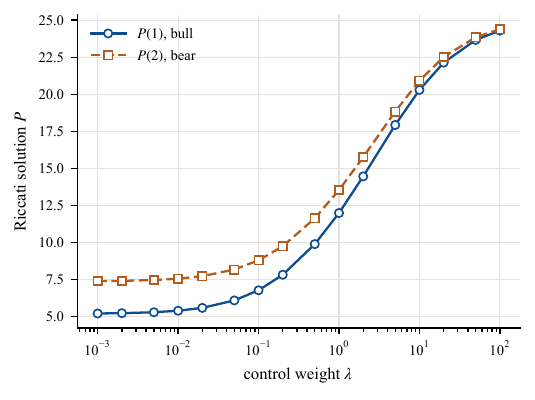}}
\caption{Sensitivity of the stabilizing solution with respect to the control weight $\lambda$.}
\label{fig-lambda-sensitivity}
\end{figure}

Figure~\ref{fig-tracking} illustrates a representative trajectory for the Markov chain process $\alpha(\cdot)$, the optimal wealth process $X^{*}(\cdot)$, and the optimal investment control $u^{*}(\cdot)$. 
Here, we slightly abuse the terminology by referring to $|u^{*}(\cdot)|$ as the position level in the risky asset. From the plots of the optimal strategy and the optimal state dynamics, it is evident that the optimal position $|u^{*}(t)|$ in the risky asset becomes increasingly smaller as the tracking difference $X^{*}(t)-G(t)$ of wealth diminishes. In particular, as the tracking difference $X^{*}(t)-G(t)$ tends to zero, the optimal position $|u^{*}(t)|$ in the risky asset also approaches zero. This is intuitively appealing, since the wealth target $G(t)$ is designed to grow at the risk-free rate $r(\alpha_{t})$; once the investor’s wealth level $X(t)$ reaches the target $G(t)$ at time $t$, the optimal amount $u^{*}(t)$ invested in the risky asset must necessarily drop to zero.

Figure~\ref{fig-lambda-sensitivity} reports the corresponding feedback coefficients and Riccati solutions.
As $\lambda$ increases, both $|\widehat\Theta(1)|$ and $|\widehat\Theta(2)|$ decrease. %For example, $(\widehat\Theta(1),\widehat\Theta(2))=(-2.8288,\,1.2014)$ for $\lambda=0.02$, whereas $(\widehat\Theta(1),\widehat\Theta(2))=(-0.8118,\,0.4433)$ for $\lambda=1$.
 Hence, a larger control penalty leads to a more conservative optimal risky position in both regimes. In contrast, $P(1)$ and $P(2)$ increase with $\lambda$, which indicates a higher tracking cost when control becomes more expensive. Moreover, a larger $\lambda$ drives the optimal risky position toward zero. Consequently, the regime-dependent coefficients $\mu(\alpha_{t}),\sigma(\alpha_{t}),\{\gamma_j(\alpha_{t-})\}_{j\in\mathcal S}$ have a weaker effect on the value function, and the gap between $P(1)$ and $P(2)$ becomes smaller.

\bibliography{references}
\bibliographystyle{plainnat}

\end{document}